\newtheorem{theorem}{Theorem}[section]
\newtheorem{lemma}[theorem]{Lemma}
\newtheorem{proposition}[theorem]{Proposition}
\theoremstyle{definition}
\newtheorem{definition}[theorem]{Definition}
\newtheorem{example}[theorem]{Example}
\theoremstyle{remark}
\newtheorem{remark}[theorem]{Remark}
\numberwithin{equation}{section}
\newcommand{\abs}[1]{\left|#1\right|}
\newcommand{\diff}{\operatorname{d}}
\newcommand{\Diff}{\operatorname{D}}
\renewcommand{\H}{\mathbb{H}}
\newcommand{\id}{\operatorname{id}}
\newcommand{\Iso}{\operatorname{Iso}}
\newcommand{\N}{\mathbb{N}}
\newcommand{\Nil}{\operatorname{Nil}}
\newcommand{\norm}[1]{\left|\left|#1\right|\right|}
\newcommand{\PSL}{\operatorname{PSL}}
\newcommand{\R}{\mathbb{R}}
\renewcommand{\S}{\mathbb{S}}
\newcommand{\SL}{\operatorname{SL}}
\newcommand{\T}{\operatorname{T}}
\newcommand{\twist}{\operatorname{twist}}
\newcommand{\turn}{\operatorname{turn}}
\newcommand{\vol}{\operatorname{area}}
\newcommand{\volf}{\operatorname{vol}}
\newcommand{\Z}{\mathbb{Z}}
\renewcommand{\phi}{\varphi}
\renewcommand{\theta}{\vartheta}
\newcommand{\la}{\langle}
\newcommand{\ra}{\rangle}
\begin{document}

\title{}
\title[Surfaces with genus one and their sisters,\today]
{Surfaces with constant mean curvature $1/2$ and genus one in $\H^2\times\R$}
\date{\today}
\author[Plehnert]{Julia Plehnert}
\address{Georg-August-Universit\"at G\"ottingen, Research Group Discrete Differential Geometry, Faculty of Mathematics, Lotzestrasse 16-18, 37083 G\"ottingen, Germany}
\email{j.plehnert@math.uni-goettingen.de}

\subjclass[2010]{Primary 53A10}

\begin{abstract}

We construct new constant mean curvature surfaces in $\H^2\times\R$. They arise as sister surfaces of Plateau solutions. It is a family of mean curvature $1/2$ surfaces with $k$ ends, genus $1$ and $k$-fold dihedral symmetry, $k\geq3$. The surfaces are Alexandrov-embedded.
\end{abstract}

\maketitle

\section{Introduction}

For minimal surfaces there exist different approaches to define them, for example as solution of Plateau's problem. In 1970 Lawson proved a correspondence between isometric surfaces in spaceforms: For example, each $H$-surface (mean curvature $H\equiv1$) in $\mathbb{R}^3$ corresponds isometrically to a minimal surface in $\mathbb{S}^3$ such that their Gau\ss{} maps coincide and their tangent vectors are rotated by $\pi/2$ (with respect to a proper interpretation of the tangent spaces). Two such surfaces are called conjugate cousins~(\cite{lawson1970}, \cite{brauckmann1993}, \cite{karcher1989}). Daniel proved an equivalent correspondence between surfaces in homogeneous $3$-manifolds in~\cite{daniel2007}. A special case of his theorem yields the so-called sister surfaces: Each simply connected surface with constant mean curvature (cmc) $H$ in $\Sigma^2(\kappa) \times \mathbb{R}$ corresponds to an isometric minimal surface in the homogeneous $3$-manifold \mbox{$E(\kappa+4H^2,H)$}. Hence, instead of proving the existence of a cmc surface in a product space directly, one solves a Plateau problem in a non-trivial Riemannian fibration with base curvature $\kappa+4H^2$ and considers its sister. 

In the case of periodic cmc surfaces in product spaces the conjugate surface construction may be outlined as follows: Consider a geodesic polygon the auxiliary $3$-manifold $E(\kappa+4H^2,H)$, which consists of vertical and horizontal edges. Solve the Plateau problem for the given curve. In the case of unbounded domains we have to take a limit of the sequence of minimal surfaces. Afterwards solve the period problem(s), i.e. choose the parameters of the geodesic polygon such that the resulting cmc sister surface has the desired properties. With those parameters we constructed a minimal disk whose sister surface is a fundamental piece of the cmc surface. We use Schwarz reflection to establish a complete smooth surface in a product space. In the last step we prove geometric properties of the surface, such as (Alexandrov-)embeddedness, the behaviour of the ends, the genus, etc.

One challenge in the construction in homogeneous manifolds is, that the normals coincide up to their vertical projection in the sense of a Riemannian fibration only. But there are more problems that occur in the construction. In order to prove the existence of periodic surfaces via conjugate construction, the Jordan curve that bounds the Plateau solution of disk-type has to consists of geodesics. The Daniel correspondence implies that a geodesic in the boundary of a minimal surface corresponds to a curvature line in the boundary of the isometric cmc surface, therefore Schwarz reflection applies and continues the surface smoothly, see~\cite{GK2010} and~\cite{MT2011}. In the case of conjugate minimal surfaces in $\mathbb{R}^3$ the total curvature of the curvature line in one surface is equal to the total turn of the normal along the corresponding geodesic in the conjugate surface. But in the case of cmc surfaces the total curvature depends also on the length of the curve. In order to solve period problems in horizontal planes, one has to control the total turn of the normal along horizontal curvature lines. For a cmc surface with $H\equiv1/2$ in $\mathbb{H}^2\times\mathbb{R}$ this is possible by a horocycle foliation of $\mathbb{H}^2$. 

In this paper we prove the existence of new mc $1/2$ surfaces in $\H^2\times\R$, which are $k$-noids (with handle). This work is based on the PhD thesis of the author,~\cite{plehnert2012}. It is organised as follows, we start with an introduction to the geometry of homogeneous $3$-manifolds and prove a formula for the vertical distance of a horizontal lift of a closed curve in Riemannian fibrations.

In Section~\ref{S:Plateau} we cite some properties of Plateau solutions in $3$-manifolds and prove a maximum principle for cmc graphs. Together with a result of Gro\ss{}e-Brauckmann and Kusner, of which we sketch the proof, this implies a Rad\'{o} theorem for $E(\kappa,\tau)$. In the subsequent section we state some facts about certain sister surfaces and have a closer look on related boundary curves, the sister curves. We prove the absense of boundary branch points under specified conditions. Section~\ref{S:refsurfaces} outlines some properties of known minimal surfaces in $\Nil_3$, which we use as barriers in our construction.

In the last section we prove the existence of a family of mc $1/2$ surfaces in $\H^2\times\R$ with $k$ ends, genus $1$ and $k$-fold dihedral symmetry, $k\geq3$, which are Alexandrov embedded. We solve an improper Plateau problem and two period problems in the construction.

%
Within the last years the theory of minimal in constant mean curvature surfaces in homogeneous $3$-manifolds has been evolved actively by many mathematicians. Abresch and Rosenberg introduced a generalized quadratic differential for immersed surfaces in product spaces and show its holomorphicity. Furthermore they classify those surfaces with vanishing differential,~\cite{AR2005}. Fernandez and Mira constructed a hyperbolic Gau\ss{} map to study mean curvature one half surfaces in $\H^2\times\R$,~\cite{FM2007b}. Recently Cartier and Hauswirth published a work, where they study constant mean curvature $1/2$ surfaces in $\H^2 \times\R$ with vertical ends~\cite{CH2012}. In~\cite{MT2011} Manzano and Torralbo constructed constant mean curvature surfaces which arise via conjugate construction from compact minimal surfaces in the Berger spheres. The idea of this work comes from the paper~\cite{GK2010} of Kusner and Gro\ss{}e-Brauckmann, which is still in progress. They discuss the conjugate construction for minimal and cmc surfaces in project spaces $\Sigma^2(\kappa)\times\R$. The solution of Plateau's problem in mean convex subsets of homogeneous $3$-manifolds is treated. As an example of conjugate Plateau construction they sketch the existence of surfaces with constant mean curvature analogous to the minimal Jorge-Meeks-$k$-noids in $\R^3$, see Section~\ref{S:knoid}. We follow the same construction idea by a sequence of compact minimal sections for our example and give a different proof of the existence in Section~\ref{SS:Plateau}.

\section{Homogeneous $3$-manifolds}\label{S:riem}
We construct cmc surfaces in $\H^2\times \R$ which arise from minimal surfaces in $\Nil_3$. Both Riemannian manifolds are homogeneous $3$-manifolds with $4$-dimensional isometry group, see~\cite{thurston1997} for a complete classification of homogeneous $3$-manifolds. 

Simply connected homogeneous $3$-manifolds with an at least $4$-dimensional isometry group besides $\H^3$ can be represented as a Riemannian fibration over a two dimensional space form $\Sigma(\kappa)$. Their fibres are geodesics and there exists a Killing field $\xi$ tangent to the fibres, called the vertical vector field. The translations along the fibres are isometries, hence the Killing field $\xi$ generates a subgroup $G$ of $\Iso(E)$. The integral curves of $\xi$ define a principal bundle with connection $1$-form $\omega(X)=\la X,\xi\ra$. The curvature form is $\Omega\coloneqq  \Diff \omega=\diff\omega+1/2[\omega,\omega]=\diff\omega$. The following equation holds for two arbitrary vector fields $X,Y$
\begin{align*}
\Omega(X,Y)&=\diff\omega(X,Y)= X \omega(Y) - Y \omega(X) - \omega\left([X,Y]\right)\\
&=\la\nabla_X Y,\xi\ra+\la Y,\nabla_X\xi\ra-\la\nabla_Y X,\xi\ra-\la X,\nabla_Y\xi\ra -\left(\la\nabla_X Y,\xi\ra-\la\nabla_Y X,\xi\ra\right)\\
&=\la\nabla_X\xi,Y\ra-\la\nabla_Y\xi,X\ra\\
&\overset{(*)}{=}  2\la\nabla_X\xi,Y\ra,
  \end{align*}
  where $(*)$ follows from the fact that $\xi$ is a Killing field. Moreover, the fact that $\xi$ is Killing implies $\xi\Omega(X,Y)=0$, i.e. $\Omega$ is constant along the fibers. Moreover, we claim: For $X^h=X-X^v$ we have $\Omega(X^h,Y^h)=\Omega(X,Y)$.  Therefore $\Omega$ induces a $2$-form $\underline{\Omega}=(\pi^{-1})^*\Omega$ on the base manifold $\Sigma$. To see the claim we compute
  \begin{align*}
\Omega(X^h,Y^h)&= 2\la\nabla_{X^h}\xi,Y^h\ra\\
&=2\la\nabla_{(X-X^v)}\xi,Y-Y^v\ra\\
&=2(\la\nabla_{X}\xi,Y\ra\underbrace{-\la\nabla_X\xi,Y^v\ra}_{=\la\nabla_{Y^v}\xi,X\ra}-\la\nabla_{X^v}\xi,Y\ra+\la\nabla_{X^v}\xi,Y^v\ra)\\
&=2\la\nabla_X\xi,Y\ra,
  \end{align*}
  because $\nabla_U \xi=0$ for any vertical vector field $U$.
  
  Since $\Sigma$ is oriented, there exists a $\pi/2$-rotation $J$ on $\T_y\Sigma$, it induces a $\pi/2$-rotation $R$ on the horizontal space $(\T_p E)^h$.
\begin{definition}\label{d:bundle}
Let $\pi\colon E\to \Sigma$ be a Riemannian fibration with geodesic fibres. Its \emph{bundle curvature}~$\tau$ is a map $\tau\colon\Sigma\to\R$ given by
  \[
 \tau(y)\coloneqq-\frac12\Omega(X,RX)=\frac12\la[X,RX],\xi\ra,\]
 hence $[X,RX]^v = 2\tau(y) \xi$, where $X$ is an arbitrary horizontal unit vector field along $\pi^{-1}(y)$.
\end{definition}

We see that $\Omega$ measures the non-integrability of the horizontal distribution. Since $\Omega$ is constant along the fibres, the map $\tau$ is well-defined. The induced $2$-form $\underline{\Omega}$ factorizes the natural volume form $\volf_\Sigma$ of $\Sigma$ 
\[-\frac12\underline{\Omega}=\tau\volf_\Sigma,\] since we have $\volf_\Sigma(x,Jx)=1$ and $\underline{\Omega}(x,Jx)=\Omega(\tilde{x},R\tilde{x})$ for any unit vector $x$ on $\Sigma$.

If the Riemannian fibration is homogeneous, the bundle curvature $\tau$ is constant. It is common to write $E(\kappa,\tau)$ for those simply connected spaces. The isometry group of $E(\kappa,\tau)$ depends on the signs of $\kappa$ and $\tau$, and is equivalent to the isometry group of one of the following Riemannian manifolds:
  
\begin{center}\begin{tabular}{l | l l l}
  curv. & $\kappa<0$ & $\kappa=0$ & $\kappa>0$ \\ \hline
  $\tau=0$        & $\H^2\times\R$ & $\R^3$ & $\S^2\times\R$ \\
  $\tau\ne0$    & $\widetilde{\SL}_2(\R)$   & $\Nil_3(\R)$ & (Berger-)$\S^3$  
  \\ 
\end{tabular}
\end{center}
Another interpretation of the bundle curvature is the vertical distance of a horizontal lift of a closed curve:
\begin{lemma}[Vertical distances]\label{l:vertdistances}
Let $\gamma$ be a closed Jordan curve in the base manifold $\Sigma(\kappa)$ of a Riemannian fibration $E(\kappa,\tau)$ with geodesic fibres and constant bundle curvature $\tau$. With $\Delta$ defined by $\partial\Delta=\gamma$, we have
\[
d(\tilde{\gamma}(0),\tilde{\gamma}(l))=2\tau\vol(\Delta),
\]
where $\tilde{\gamma}$ is the horizontal lift with $\pi(\tilde{\gamma}(0))=\pi(\tilde{\gamma}(l))$, $\vol$ is the oriented volume and $d(p,q)$ denotes the signed vertical distance, which is positive if $\overline{pq}$ is in the fibre-direction $\xi$.
\end{lemma}
\begin{proof}
We consider an arclength parametrization $\gamma\colon[0,l]\to\Sigma(\kappa)$ and its horizontal lift $\tilde{\gamma}$. Then $\tilde{\gamma}(0)$ and $\tilde{\gamma}(l)$ are contained in one fibre, i.e. $\pi(\tilde{\gamma}(0))=\pi(\tilde{\gamma}(l))$. Hence, there exists a vertical arclength parametrized curve $c$ with $c(0)=\tilde{\gamma}(l)$ and $c(v)=\tilde{\gamma}(0)$. The union $c\cup\tilde{\gamma}\eqqcolon\tilde{\Gamma}$ is a closed curve in $E(\kappa,\tau)$ and $c'$ is parallel to $\xi$. If $c'=\pm\xi$, then
\[
\pm v
=\int\limits_0^v \la c',\xi\ra=\int\limits_0^v \la c',\xi\ra+\int\limits_0^l\la \tilde{\gamma}',\xi\ra,
\]
since $\tilde{\gamma}'$ is horizontal. By definition of the connection $1$-form $\omega$, the sum of the integrals is equal to $\int\limits_{\tilde{\Gamma}}\omega$.
We apply Stokes's Theorem to get
\[
\int\limits_{\tilde{\Gamma}}\omega=\int\limits_{\tilde{\Delta}}\diff\omega=\int\limits_{\tilde{\Delta}}\Omega=\int\limits_{\pi(\tilde{\Delta})}(\pi^{-1})^*\Omega,
\]
where $\tilde{\Delta}$ is any lift of $\Delta$, such that $\pi\colon\tilde{\Delta}\to\Delta$ is one-to-one and $\partial\tilde{\Delta}=\tilde{\Gamma}$. Since $(\pi^{-1})^*\Omega$ is a $2$-form on $\Sigma$, it factorizes the natural volume form $\volf_\Sigma$
\[\int\limits_{\pi(\tilde{\Delta})}(\pi^{-1})^*\Omega=-2\int\limits_\Delta\tau \volf_\Sigma=-2\tau\vol(\Delta).\]
We conclude that
\[
2\tau\vol(\Delta)=
\begin{cases}
-v, &  \text{if } c'=\xi, \\
 v, & \text{if } c'=-\xi,
\end{cases}
\]
and $d(\tilde{\gamma}(0),\tilde{\gamma}(l))=d(c(v),c(0))=2\tau\vol(\Delta)$.
\end{proof}

\section{Solution of the Plateau problem}\label{S:Plateau}
From the 18th century until 1930 it was an open question whether a rectifiable closed curve $\Gamma$ in $\R^3$ bounds an area minimizing disc. Douglas~\cite{douglas1931} and Rad\'{o}~\cite{rado1930} proved independently that there always exists an area minimizing disc spanned by $\Gamma$. In 1948 Morrey generalised the theorem to minimal discs in homogeneously regular Riemannian manifolds without boundary~(\cite{morrey1966}). By~\cite{osserman1970} and~\cite{gulliver1973}, the least area disc is a minimal immersion in the interior. Under additional assumptions we may exclude boundary branch points, see Section~\ref{SS:reflection} later on.

In the 80ties Meeks and Yau~(\cite{MY1982}) showed that in compact manifolds with mean convex boundary the Plateau solution $M$ is even embedded. A Riemannian manifold $N$ with boundary is \emph{mean convex} if the boundary $\partial N$ is piecewise smooth, each smooth subsurface of $\partial N$ has non-negative mean curvature with respect to the inward normal, and there exists a Riemannian manifold $N'$ such that $N$ is isometric to a submanifold of $N'$ and each smooth subsurface $S$ of $\partial N$ extends to a smooth embedded surface $S'$ in $N'$ such that $S'\cap N=S$ and two surfaces meet transversally at the non-smooth points of $\partial N$. We call each surface $S$ a \emph{barrier}.

One interesting question in the context of Plateau's problem is to ask, how many minimal surfaces of disc type are defined by a given closed Jordan curve? In general the answer is unknown. Rad\'{o} proved in~\cite{rado1930}: If the Jordan curve $\Gamma$ is graph over the boundary of a convex domain $\Delta\subset\R^2$, then $\Gamma$ bounds at most one minimal surface of disc type and the minimal surface is graph above $\Delta$. We consider Plateau's problem in mean convex domains in $E(\kappa,\tau)$. In a Riemannian fibration we have a natural notion of graphs, i.e. a section of the bundle $\pi\colon E(\kappa,\tau)\to\Sigma(\kappa)$. The following two propositions imply Rad\'{o} theorem for $E(\kappa,\tau)$.
In~\cite{GK2010} was proven if the boundary of a minimal surface projects to the boundary of a convex disc then it is a section:
\begin{proposition}
Let $\Delta\subset\Sigma(\kappa)$ be a convex domain in the base of the Riemannian fibration $E(\kappa,\tau)$. Suppose $\overline{M}\subset\overline{\pi^{-1}(\Delta)}$ is a compact minimal disc. If the boundary projection $\pi\colon\partial M\to\partial\Delta$ is injective, then $M$ is a section over $\Delta$.
\end{proposition}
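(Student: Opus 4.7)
The plan is a vertical sliding argument using the one-parameter group $\{\Phi_t\}$ generated by the Killing field $\xi$. Each translate $M_t:=\Phi_t(M)$ is again a compact minimal disc whose boundary projects injectively onto $\partial\Delta$, and $\Phi_t$ acts on $E(\kappa,\tau)$ without fixed points for $t\ne0$ in the models relevant to the paper.

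First I would verify that $\pi^{-1}(\Delta)$ is mean convex. Convexity of $\Delta$ means $\partial\Delta$ has nonnegative geodesic curvature, and since the vertical cylinder $\pi^{-1}(\partial\Delta)$ is ruled by the geodesic fibres, its mean curvature with respect to the inward normal equals this geodesic curvature and is nonnegative. The interior maximum principle therefore rules out any interior point of $M$ touching $\pi^{-1}(\partial\Delta)$: if such a tangency occurred, $M$ would be forced to lie inside this cylinder, collapsing $\pi(M)$ to the curve $\partial\Delta$, which is incompatible with $\dim M=2$.

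Now suppose, for contradiction, $M$ is not a section. Then two distinct points of $M$ share a fibre, giving some $t_0>0$ with $M\cap M_{t_0}\neq\emptyset$. Since $M$ is compact the set $\{t>0 : M\cap M_t\neq\emptyset\}$ is bounded; let $t^*$ be its supremum, so $t^*\in(0,\infty)$. A routine compactness argument supplies a touching point $p\in M\cap M_{t^*}$, and $q:=\Phi_{-t^*}(p)$ lies in $M$. Neither $p$ nor $q$ can belong to $\partial M$: if $p\in\partial M$ then $\pi(q)=\pi(p)\in\partial\Delta$, and the barrier just established forces $q\in\partial M$, so the boundary injectivity hypothesis gives $p=q$, contradicting $t^*>0$; the same argument handles $q$. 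At the interior touching point $p$, $M$ and $M_{t^*}$ are necessarily tangent, since a transversal crossing would persist under the small perturbation $\Phi_\varepsilon$ and contradict $M\cap M_{t^*+\varepsilon}=\emptyset$ for small $\varepsilon>0$. The same persistent disjointness forces the two minimal surfaces to lie locally on one side of each other near $p$, so the strong maximum principle — applied to the difference of local graph representations, which solves the linearised minimal surface equation — gives $M=M_{t^*}$ in a neighbourhood of $p$. Analyticity of minimal surfaces together with connectedness of $M$ then propagates this equality globally, so $\Phi_{t^*}(M)=M$; restricting $\Phi_{t^*}$ to $M$ yields a continuous self-map of a compact disc which, by Brouwer's fixed-point theorem, has a fixed point, contradicting the fact that $\Phi_{t^*}$ is fixed-point free on $E(\kappa,\tau)$.

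The main technical obstacle is the application of the maximum principle at $p$ in the degenerate subcase where $T_pM$ happens to contain $\xi$: then the sliding direction is tangent to $M$ at $p$, and the required one-sidedness of $M$ and $M_{t^*}$ near $p$ is not immediate. I would handle this by parametrising both surfaces as graphs over the common tangent plane $T_pM$ and extracting from the persistent disjointness $M\cap M_{t^*+\varepsilon}=\emptyset$ an ordering of the two graph functions in a small neighbourhood of $p$, after which the strong maximum principle applies as before.
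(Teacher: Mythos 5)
Your proof follows the same vertical sliding strategy as the paper: translate $M$ along the fibre flow $\Phi_t$, locate a critical touching time, and apply the maximum principle there. The two arguments diverge in the endgame. Where the paper treats a touching point $p$ with $\pi(p)\in\partial\Delta$ by observing that one of the two surfaces is then tangent from one side to the vertical plane over a supporting geodesic of $\partial\Delta$, hence contained in it by the maximum principle, hence incompatible with $\pi(\partial M)=\partial\Delta$, you first rule out all boundary touching via the barrier plus boundary-injectivity and then finish the interior case by analytic continuation to $\Phi_{t^*}(M)=M$ followed by Brouwer's fixed-point theorem on $\overline{M}$. Both routes work in the noncompact-fibre setting relevant here; the Brouwer argument is a clean way to make explicit what the paper leaves implicit when it merely writes ``by the maximum principle $\pi(p)\notin\Delta\setminus\partial\Delta$''. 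You also flag the degenerate case $\xi\in T_pM$, which the paper does not mention; your proposed fix (graphs over the common tangent plane plus persistent disjointness for $t>t^*$) is the right instinct, but note that extracting an ordering of the two graph functions really does take an argument when the sliding direction lies in the tangent plane, since the disjointness $M\cap M_{t^*+\varepsilon}=\emptyset$ no longer moves one graph strictly in the normal direction.

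One small but genuine error: in your barrier verification you claim that $M\subset\pi^{-1}(\partial\Delta)$ would be ``incompatible with $\dim M=2$.'' That is not the contradiction, since the cylinder $\pi^{-1}(\partial\Delta)$ is itself two-dimensional and could in principle contain $M$. The actual obstruction is the one the paper uses: a minimal surface can be tangent to the cylinder at an interior point only where $\partial\Delta$ is geodesic, and then the maximum principle forces $M$ into the vertical plane $\pi^{-1}(\gamma)$ over that geodesic $\gamma$. Since $\pi\colon\partial M\to\partial\Delta$ is an injective continuous map between circles, hence a homeomorphism, $\pi(\partial M)=\partial\Delta$ would then be contained in $\gamma$, which is impossible for a Jordan curve. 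Replacing the dimension count with this argument closes the gap.
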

For the sake of completeness we sketch the proof.
\begin{proof}
Let $\Omega\coloneqq \pi^{-1}(\Delta)$ and $\Phi_t\colon \Omega\to\Omega$ be the flow of the vertical Killing field $\xi$, where $t\in\R$. With $M_t\coloneqq \Phi_t(M)$ we have $M_0=M$. The surface $M$ is a section if $M_t\cap M_s=\emptyset$ for $t\ne s$. It sufficies to show $M\cap M_t=\emptyset$ for all $t$.

Assume the contrary, there exists $T\coloneqq \inf\{t>0\colon M\cap M_t=\emptyset\}>0$. This implies the existence of $p\in \overline{M}\cap\overline{M_T}$. By the maximum principle $\pi(p)\notin\Delta\setminus\partial\Delta$. But for $\pi(p)\in\partial\Delta$ follows that $p$ is an interior point for at least one of the surfaces, since the boundary projection $\pi\colon\partial M\to\partial\Delta$ is injective. This surface is then tangential one-sided to a vertical plane, and by the maximum principle it agrees with a subset of the vertical plane. This contradicts the injectivity of the boundary projection.

The case $T\coloneqq \sup\{t<0\colon M\cap M_t=\emptyset\}<0$ is analogous.
\end{proof}

The maximum principle implies uniqueness of constant mean curvature sections with the same boundary data:
\begin{proposition}\label{p:uniquesection}
Let $\pi\colon E\to\Sigma$ be a Riemannian fibration with geodesic fibres. Suppose $M$ is a section over $\Delta\subset\Sigma$ with mean curvature $H$ and prescribed boundary values such that $\pi\colon\partial M\to\partial\Delta$ is injective. Then $M$ is unique.
\end{proposition}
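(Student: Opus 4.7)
The plan is to slide one section along the flow of the vertical Killing field $\xi$ until it last touches the other, and then to invoke the interior maximum principle for surfaces of the same constant mean curvature. Suppose $M'$ is a second section over $\Delta$ with the same mean curvature $H$ and the same prescribed boundary, and let $\Phi_t$ denote the flow of $\xi$. Since $\xi$ is Killing, each $\Phi_t$ is an isometry, so $M'_t \coloneqq \Phi_t(M')$ is again a section over $\Delta$ with mean curvature $H$, whose boundary is obtained from $\partial M' = \partial M$ by the vertical translation $\Phi_t$.

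Assume for contradiction $M \ne M'$. Both being sections, there is a well-defined signed vertical distance $h\colon\overline{\Delta}\to\R$, where $h(y)$ denotes the fibre distance from the unique point of $M'$ over $y$ to the unique point of $M$ over $y$. The injectivity of the boundary projection together with $\partial M = \partial M'$ forces $h\equiv 0$ on $\partial\Delta$, while $h\not\equiv 0$ on $\Delta$. After possibly interchanging the roles of $M$ and $M'$, we may assume $T\coloneqq \max_{\overline\Delta} h > 0$ is attained at an interior point $p_0\in\Delta$. By construction $M'_T$ lies weakly on one side of $M$ and touches it tangentially at the interior point $q_0$ sitting above $p_0$.

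Both surfaces are smooth at $q_0$ and have the same constant mean curvature $H$ computed with respect to compatible unit normals, since both are oriented as sections so that the normal has positive inner product with $\xi$. In a local trivialisation of the bundle around $q_0$ each surface appears as a graph over a disc in $\Sigma(\kappa)$, and the cmc equation for such graphs is quasilinear elliptic in standard form. The interior Hopf maximum principle then forces $M$ and $M'_T$ to agree in a neighbourhood of $q_0$, and hence globally by unique continuation and connectedness. This contradicts the fact that $\partial M'_T$ is the vertical translate of $\partial M$ by $T>0$, so $M = M'$.

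The main delicate point is the appeal to the maximum principle inside the non-trivial total space of the fibration; I would handle this by passing to a local product chart centered at $q_0$ in which both sections become ordinary graphs, so that the classical Hopf argument applies verbatim. The section hypothesis is essential on two counts: it enables the pointwise fibrewise comparison that defines $h$, and it provides a coherent choice of compatible normal orientations, without which the statement "same mean curvature $H$" would be sign-ambiguous.
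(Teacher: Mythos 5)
Your sliding argument is correct, but it takes a genuinely different route from the paper's proof. The paper never slides one section past the other: it writes both sections non-parametrically as solutions $u$, $\hat u$ of the quasilinear mean curvature equation $Q(\cdot)=0$, forms $w=u-\hat u$, and by the integral mean-value trick shows that $w$ satisfies a divergence-form linear equation $L(w)=\sum_{i,j}\partial_i(a^{ij}\partial_j w)=0$ with $(a^{ij})$ symmetric and uniformly elliptic on compacta; the strong maximum principle for $L$ then forces $w$ to be constant, contradicting $w|_{\partial\Delta}=0$. You instead translate $M'$ by the flow $\Phi_T$ of the vertical Killing field until last interior contact and invoke the tangency principle for two cmc graphs. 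The two proofs are morally the same maximum-principle argument applied at the same interior point, but the division of labor differs: the paper is self-contained, carrying out the linearization explicitly and verifying ellipticity, while yours packages exactly that step into a single appeal to the Hopf tangency principle for cmc sections, which you correctly note must be justified in a local trivialization. Your version is shorter and more geometric; the paper's makes visible the PDE structure ($a^i(p)=\lambda p_i/\sqrt{1+|p|^2}$, the Schwarz-inequality ellipticity estimate) that the tangency principle rests on, which is arguably safer in a non-trivial fibration where one should check that the non-parametric equation really is uniformly elliptic. One small stylistic point: rather than appealing separately to "unique continuation and connectedness" after the local coincidence, it is a bit cleaner to observe that the set where $h$ attains its maximum $T$ is open (by the tangency principle) and closed, hence all of $\Delta$, so $h\equiv T>0$, directly contradicting $h\equiv 0$ on $\partial\Delta$.
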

\begin{proof}
Assume that we have two sections $M$ and $\hat{M}$ over $\Delta$ with the same boundary values given by $u$ and $\hat{u}$. Moreover, assume $w\coloneqq u-\hat{u}\not\equiv 0$, meaning $\abs{w}$ has a maximum in an interior point $p\in\Delta\setminus\partial\Delta$, since $w\vert_{\partial\Delta}\equiv 0$. By exchanging $u$ and $\hat{u}$ we may assume $w\leq w(p)>0$.

The sections fulfil the non-parametric mean curvature equation. Therefore, their parametrizations $u$ and $\hat{u}$ are solutions of the following differential equation:
\[
Q(u)\coloneqq\partial_x\left(\frac{\lambda U}{W}\right)+\partial_y\left(\frac{\lambda V}{W}\right)-2\lambda^2 H=0,
\]
where $U=u_x+\lambda\tau y,\, V=u_y-\lambda\tau x,\, W=\sqrt{1+U^2+V^2}$ and $\lambda=\frac{4}{4+\kappa(x^2+y^2)}$.

This equation is non-linear. We set $a^i(p)\coloneqq\frac{\lambda p_i}{\sqrt{1+p_1^2+p_2^2}}$ and $R\coloneqq U\partial_x+V\partial_y$. Considering the difference we get:
\begin{align*}
0&=Q(u)-Q(\hat{u})=\sum\limits_i \partial_i a^i(R)-\partial_i a^i(\hat{R})\\
&=\sum\limits_i\partial_ia^i(tR+(1-t)\hat{R})\vert_{t=0}^{t=1}\\
&=\int\limits_0^1\frac{\diff}{\diff s}\left[\sum\limits_i\partial_ia^i(sR+(1-s)\hat{R})\right]_{s=t}\diff t\\
&=\sum\limits_{i,j}\int\limits_0^1\partial_i
	\left[
		\frac{\partial a^i}{\partial p_j}(tR+(1-t)\hat{R})(R_j-\hat{R}_j)
	\right]\diff t\\
&=\sum\limits_{i,j}\partial_i\left[\underbrace{\left(\int\limits_0^1\frac{\partial a^i}{\partial p_j}(tR+(1-t)\hat{R})\diff t\right)}_{\eqqcolon a^{ij}}(R_j-\hat{R}_j)\right]\\
&=\sum\limits_{i,j}\partial_i [a^{ij}\partial_j(u-\hat{u})].
\end{align*}
Since 
\[
\partial_ja^i(p)=\frac{\lambda\delta_{ij}}{\sqrt{1+p_1^2+p_2^2}}-\frac{\lambda p_i p_j}{\sqrt{1+p_1^2+p_2^2}^3}=\partial_ia^j(p),
\]
then $(a^{ij})_{ij}$ is symmetric, moreover $\abs{(\partial_i a^{ij})_j}$ is bounded. Therefore with $w= u-\hat{u}$, we have that
\[
L(w)\coloneqq \sum\limits_{i,j}\partial_i (a^{ij}\partial_jw)=Q(u)-Q(\hat{u})=0
\]
is a linear second order partial differential operator.

With $T(t,x,y)\coloneqq tR(x,y)+(1-t)\hat{R}(x,y)$ we get
$
\partial_j a^i(T)=\frac{\lambda\delta_{ij}}{\sqrt{1+\abs{T}^2}}- \frac{\lambda T_i T_j}{\sqrt{1+\abs{T}^2}^3}
$.
For any compact subset $K\subset\Delta$ the norm $\abs{T}$ has a maximum on $[0,1]\times K$. Hence, there exists $\sigma(K,u,\hat{u})>0$, such that we may estimate, by means of the Schwarz inequality,
\[
\sum\limits_{i,j}a^{ij}(T)\xi_i\xi_j=\frac{(1+\abs{T}^2)\abs{\xi}^2-\la T,\xi\ra^2}{\sqrt{1+\abs{T}^2}^3}\geq\frac{\abs{\xi}^2}{\sqrt{1+\abs{T}^2}^3}>\sigma\abs{\xi}^2.
\]
We conclude that $L$ is uniformly elliptic and therefore, the maximum principle for elliptic partial differential equations~(\cite{GT2001}) applies.

Hence, $w\equiv w(p)$ which contradicts $w\vert_{\partial\Delta}\equiv 0$.
\end{proof}

\begin{remark}
The uniqueness of a section is also true in a more general case: The projection of $\partial M$ has to be injective except for at most finitely many points of $\partial \Delta$. This means, we allow vertical segments in the boundary. The proof needs a more general maximum principle by Nitsche; for $\R^3$ see~\cite{nitsche1975}.
\end{remark}

\section{Sister surfaces}\label{C:sistersurfaces}
Lawson made a great contribution in the study of constant mean curvature surfaces in 1970 when he showed the isometric correspondence between minimal and cmc surfaces in different space forms, for example between minimal surfaces in $\mathbb{S}^3$ and $H$-surfaces in $\R^3$, see~\cite{lawson1970}. By means of the reflection principle in $\mathbb{S}^3$ it was then possible to construct new cmc surfaces in the Euclidean space. In recent years mathematicians have grown their interest within surfaces in other ambient manifolds. In 2007 Daniel published a generalized Lawson correspondence for homogeneous manifolds~(\cite{daniel2007}), we use one special case of his correspondence:

\begin{theorem}[{\cite[Theorem 5.2]{daniel2007}}] \label{t:correspondence} There exists an isometric correspondence between an mc $H$-surface $\tilde{M}$ in $\Sigma(\kappa)\times\R=E(\kappa,0)$ and a minimal surface $M$ in $E(\kappa+4H^2,H)$. Their shape operators are related by \begin{equation}\label{e:shapeoperators}\tilde{S}=JS+H\id,\end{equation} where $J$ denotes the $\pi/2$ rotation on the tangent bundle of a surface. Moreover, the normal and tangential projections of the vertical vector fields $\xi$ and $\tilde{\xi}$ are related by
\begin{equation}
\la\tilde{\xi},\tilde{\nu}\ra=\la\xi,\nu\ra,\qquad J\diff f^{-1}(T)=\diff \tilde{f}^{-1}(\tilde{T}),
\end{equation}
where $f$ and $\tilde{f}$ denote the parametrizations of $M$ and $\tilde{M}$ respectively, $\nu$ and $\tilde{\nu}$ their unit normals, and $T$, $\tilde{T}$ the projections of the vertical vector fields on $\T M$ and $\T\tilde{M}$.
\end{theorem}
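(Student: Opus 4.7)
The plan is to reduce the statement to a Bonnet-type fundamental theorem for surfaces in homogeneous $3$-manifolds $E(\kappa,\tau)$, which is the other main result of~\cite{daniel2007}. That theorem asserts that a simply connected Riemannian surface $(M,g)$ admits an isometric immersion into $E(\kappa,\tau)$ with prescribed shape operator $S$, tangential projection $T$ of the vertical vector field, and normal component $\nu_3\coloneqq\la\xi,\nu\ra$, uniquely up to ambient isometry, provided $(g,S,T,\nu_3)$ satisfies four compatibility equations: the Gauss and Codazzi equations, together with evolution equations for $T$ and $\nu_3$ that encode the fibration structure.

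Starting from a minimal immersion $f\colon M\to E(\kappa+4H^2,H)$ with data $(g,S,T,\nu_3)$, I would endow the same abstract surface with the tilded data
\begin{equation*}
\tilde g\coloneqq g,\quad \tilde S\coloneqq JS+H\id,\quad \tilde T\coloneqq JT,\quad \tilde\nu_3\coloneqq\nu_3,
\end{equation*}
and show that this tuple is admissible for $E(\kappa,0)=\Sigma(\kappa)\times\R$ with mean curvature $H$. Symmetry of $\tilde S$ is the first nontrivial point: since $M$ is minimal, $\operatorname{tr} S=0$, and a direct $2\times 2$ computation then gives $JS+SJ=0$, so $JS$ is symmetric, and adding $H\id$ preserves this. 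The trace $\operatorname{tr}\tilde S=\operatorname{tr}(JS)+2H=2H$ is the expected mean curvature.

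The heart of the proof is verifying the four compatibility equations for the tilded tuple, starting from their untilded counterparts. Because $J$ is parallel with $J^2=-\id$, one has $\nabla_X(JT)=J\nabla_X T$, and the identities $\det J=1$, $\operatorname{tr}(JS)=0$ give
\begin{equation*}
\det\tilde S=\det(JS+H\id)=\det S+H^2.
\end{equation*}
This $+H^2$ exactly compensates the change in the ambient sectional-curvature term $\tau^2+(\kappa-4\tau^2)\nu_3^2$ between the values $\tau=H$, $\kappa+4H^2$ and $\tau=0$, $\kappa$, so the Gauss equation transfers. For the Codazzi equation and Daniel's two vertical equations $\nabla_X T=\nu_3(SX-\tau JX)$ and $X(\nu_3)=-\la SX-\tau JX,T\ra$, the substitution $(S,T,\tau)\mapsto(JS+H\id,JT,0)$ reduces each tilded equation to its untilded analogue, the $\tau=H$ term being absorbed into the $H\id$ summand of $\tilde S$ after premultiplication by $J$. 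Once all four equations hold, the Bonnet-Daniel theorem produces an isometric immersion $\tilde f\colon M\to\Sigma(\kappa)\times\R$ realising the prescribed data; by construction $\tilde M\coloneqq\tilde f(M)$ has constant mean curvature $H$, shape operator satisfying~(\ref{e:shapeoperators}), and the two identities on the vertical projections.

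The main obstacle is the algebraic bookkeeping inside the Codazzi and vertical equations: the curvature tensor of $E(\kappa,\tau)$ is not that of a space form but carries mixed terms in $\tau$ and $\nu_3$, so each equation contains several correction summands whose cancellation under the simultaneous substitutions $\tau\leftrightarrow H$ and $\kappa\leftrightarrow\kappa+4H^2$ is not obvious. That cancellation is precisely what singles out the transformation $\tilde S=JS+H\id$, $\tilde T=JT$, $\tilde\nu_3=\nu_3$ among the possible candidates and thereby forces the correspondence $E(\kappa+4H^2,H)\leftrightarrow E(\kappa,0)$ in the statement.
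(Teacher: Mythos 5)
The paper does not prove this theorem at all: it is quoted from~\cite{daniel2007} (Daniel's Theorem~5.2) and used as a black box, so there is no internal proof for your attempt to be compared against. That said, your sketch faithfully reconstructs Daniel's actual argument. Reducing to the Bonnet-type fundamental theorem for $E(\kappa,\tau)$, proposing the transformation $(\tilde g,\tilde S,\tilde T,\tilde\nu_3)=(g,\,JS+H\id,\,JT,\,\nu_3)$, and verifying that it carries the four compatibility equations for $E(\kappa+4H^2,H)$ to those for $E(\kappa,0)$ is precisely how Daniel proves it. Your key computation $\det\tilde S=\det(JS)+H\operatorname{tr}(JS)+H^2=\det S+H^2$ (using $\operatorname{tr}(JS)=0$ for minimal $M$ and $\det J=1$) is exactly what makes the Gauss equation transfer, since $\tau^2+(\kappa-4\tau^2)\nu_3^2$ takes the same value $H^2+\kappa\nu_3^2$ for both $(\kappa+4H^2,H)$ and $(\kappa,0)$. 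The Codazzi equation transfers because $\nabla J=0$ and $\nabla$ is torsion-free (so the $H\id$ term contributes nothing), combined with the two-dimensional identity $\la Y,T\ra JX-\la X,T\ra JY=\la Y,JT\ra X-\la X,JT\ra Y$; the two vertical evolution equations transfer by the factoring $JSX+HX=J(SX-HJX)$, as you indicate. Your proposal is correct and coincides with Daniel's proof.
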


We call the isometric surfaces $M$ and $\tilde{M}$ \emph{sister surfaces}, or \emph{sisters} in short.

{\it Examples}:
\begin{enumerate}
\item For $H=0$ the surfaces $M$ and $\tilde{M}$ are conjugate minimal surfaces in $\Sigma(\kappa)\times\R$.
\item For $H\in(0,1/2)$ and $\kappa=-1$ we have $4H^2-1<0$ and therefore corresponds to a minimal surface in $\widetilde{\PSL}_2(\R)$ and its mc $H$-sister surface in $\H^2\times\R$.
\item For $H=1/2$ and $\kappa=-1$ an mc $1/2$-surface in $\H^2\times\R$ corresponds to a minimal surface in $E(0,1/2)=\Nil_3(\R)$.
\item Furthermore, for $H>1/2$ an mc $H$-surface in $\H^2\times\R$ results from a minimal surface in the Berger spheres $E(4H^2-1,H)$, since $4H^2-1>0$.
\end{enumerate}

\subsection{Reflection principles}\label{SS:reflection}
We want to apply Schwarz reflection to construct complete periodic cmc surfaces in $E(\kappa,\tau)$. It is well-known that Schwarz reflection extends minimal surfaces in space forms (see~\cite{lawson1970}) with respect to the space form symmetries. In $E(\kappa,\tau)$ the isometry group has at least dimension $4$, and there are symmetries, for which Schwarz reflection applies. We reflect across a geodesic $c\subset E(\kappa,\tau)$ or a totally geodesic plane $V\subset E(\kappa,\tau)$, i.e. the geometric interpretation is to send a point $p$ to its opposite point on a geodesic through $p$ that meets $c$ or $V$ orthogonally.

If $c$ is a horizontal or vertical geodesic of $E(\kappa,\tau)$, then a geodesic reflection across $c$ is an isometry. Moreover, in the product spaces $E(\kappa,0)=\Sigma(\kappa)\times\R$ a geodesic reflection across vertical or horizontal planes is an isometry. For those isometries we formulate \emph{Schwarz reflection principles}:

\begin{quote}
Suppose that a minimal surface is smooth up to the boundary, and the boundary contains a curve which is a horizontal or vertical geodesic of $E(\kappa,\tau)$. Then the geodesic is an asymptotic direction and the reflection preserves the principal curvatures, therefore it extends the surface smoothly.

Moreover, a cmc surface in a product space $E(\kappa,0)$, which is smooth up to the boundary, extends smoothly if the boundary contains a curve in a vertical or horizontal plane, provided the surface conormal is perpendicular to the plane, since the curve a curvature direction. A totally geodesic plane is called \emph{mirror plane}, and a curve in which the surface meets a mirror plane orthogonally is called \emph{mirror curve}.
\end{quote}

We construct surfaces by solving Plateau problems. The solution is an area minimizing map from a disc to $E(\kappa,\tau)$, continuous up to the boundary. By~\cite{osserman1970} and~\cite{gulliver1973} the solution does not have branch points in the interior, hence it is an immersion.

Schwarz reflection extends the surface smoothly, but after reflection branch points may occur. Under certain assumptions, we may exclude boundary branch points, i.e. the surface extends as a smooth immersion:
\begin{proposition}\label{P:boundarybranch}
Let $M\subset E(\kappa,\tau)$ be a minimal surface of disc-type continuous up to the boundary $\Gamma=\partial M$. If $\Gamma$ is a Jordan curve that consists of both horizontal and vertical geodesics, such that for each edge of $\Gamma$ exists a vertical plane or a horizontal umbrella $S'$ as barrier for $\Gamma$, i.e. $S'\cap\overline{M}\subset\Gamma$, and moreover at each vertex $v$, the angle is of the form $\pi/n_v$, with $n_v\ge 2$, and there exists an union $\Gamma_v$ of $n_v$ copies of $\Gamma$, obtained by succesive $\pi$ rotations about the edges, such that there is a barrier $S'$ for $\Gamma_v$ in $v$. Then $M$ extends smoothly without branch points by Schwarz reflection across $\Gamma$.
\end{proposition}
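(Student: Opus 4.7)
The plan is to rule out boundary branch points of $M$ separately at interior points of edges and at vertices, in each case using Schwarz reflection to turn a would-be boundary branch point into an interior branch point of a locally area-minimizing minimal disc, and then invoking the theorem of Osserman and Gulliver cited at the start of Section~\ref{S:Plateau} that area-minimizing discs have no interior branch points. Once boundary branch points are excluded, Schwarz reflection itself extends $M$ smoothly, since the reflections across horizontal or vertical geodesics of $E(\kappa,\tau)$ are ambient isometries.

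First I would treat a candidate branch point $p$ in the interior of an edge $e$ of $\Gamma$, which is a horizontal or vertical geodesic of $E(\kappa,\tau)$. Geodesic reflection $\sigma_e$ across such a curve is a genuine ambient isometry by the reflection principles collected in Section~\ref{SS:reflection}. Near $p$ I form $M\cup\sigma_e(M)$; this is the image of the harmonic, hence minimal, extension obtained by Schwarz reflecting an isothermal parametrisation of $M$ across its straight boundary in the parameter half-disc, and a branch point of $M$ at $p$ appears as an interior branch point of this extension. The barrier $S'$ associated to $e$ is a vertical plane or a horizontal umbrella containing $e$, and is therefore preserved as a set by $\sigma_e$, so $S'\cap\overline{M\cup\sigma_e(M)}\subset e$. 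Any local area-reducing competitor for the extended surface, restricted to either half and pulled back by $\sigma_e^{-1}$ when necessary, would produce a competitor of no greater area for the Plateau problem solved by $M$, contradicting its area-minimality. Hence $M\cup\sigma_e(M)$ is locally area-minimizing, and the Osserman--Gulliver theorem excludes the interior branch point.

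Next I would handle a vertex $v$ of opening angle $\pi/n_v$ by iterating $\pi$-rotations about the two edges of $\Gamma$ meeting at $v$. These rotations are again ambient isometries, and by hypothesis the resulting orbit of $\Gamma$ is the closed curve $\Gamma_v$, for which a barrier $S'$ at $v$ exists. The corresponding copies of $M$ assemble around $v$ to fill the full $2\pi$ angular neighbourhood, since the dihedral action generated by the two edge-rotations sweeps total angle $n_v\cdot 2\pi/n_v=2\pi$, so topologically the union is a disc about $v$. The barrier $S'$ then provides the mean-convex confinement needed to transport local area-minimization from $M$ to the whole dihedral union, again by pulling back any local area-reducing competitor through the appropriate composition of rotations. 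A branch point of $M$ at $v$ would then appear as an interior branch point of a locally area-minimizing disc, once more excluded by Osserman--Gulliver.

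The main obstacle I anticipate is the geometric bookkeeping around the vertex rather than any new analytic ingredient: one must verify that the copies of $M$ produced by the dihedral action have pairwise disjoint interiors and genuinely fit together to form an embedded disc neighbourhood of $v$, using the barrier $S'$ for $\Gamma_v$ together with the maximum principle to prevent overlaps and to keep the extension inside the mean-convex region. Once this is established, the transport of area-minimization through the finitely many isometric pieces is routine, and the Osserman--Gulliver interior regularity statement closes the argument, after which Schwarz reflection extends $M$ smoothly across $\Gamma$ as claimed.
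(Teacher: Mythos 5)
Your overall strategy---reduce boundary branch points to interior ones by Schwarz reflection and then invoke interior regularity of area-minimizers---is genuinely different from what the paper does, and it contains gaps that the paper's shorter argument avoids.

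The paper's proof is a direct application of the Hopf boundary point lemma, not a reduction to Osserman--Gulliver. Near a point $p$ in the interior of an edge, the barrier hypothesis $S'\cap\overline M\subset\Gamma$ says that the minimal surface $S'$ (a vertical plane or a horizontal umbrella) touches $M$ only along $\Gamma$ and $M$ lies to one side of it. Pulling back a suitable comparison function (essentially the signed distance to $S'$) through an almost conformal harmonic parametrization $f$, one gets a nonnegative solution of a second-order elliptic equation vanishing at $p$, and Hopf's boundary lemma forces a nonzero normal derivative, hence $\diff f_p\neq 0$. At a vertex $v$ with angle $\pi/n_v$, the paper forms the union $\Gamma_v$ of $n_v$ copies of $\Gamma$ by successive $\pi$-rotations; those $n_v$ copies span total angle $n_v\cdot(\pi/n_v)=\pi$, so $v$ becomes an interior point \emph{of an edge} of $\Gamma_v$ (a straight boundary angle), and the barrier hypothesis for $\Gamma_v$ at $v$ reduces the vertex case to the edge case just treated. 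Note this argument needs only that $M$ is minimal (stationary), exactly as the proposition states.

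Your approach has three concrete problems. First, the proposition nowhere assumes $M$ is area-minimizing---only that it is a minimal disc continuous up to the boundary---while Osserman--Gulliver interior regularity requires minimization; so the proposition as stated cannot be proved this way, only the special instances arising in the paper's Plateau construction. Second, even granting that $M$ minimizes, the assertion that $M\cup\sigma_e(M)$ is locally area-minimizing does not follow from ``restrict the competitor to one half and pull back'': the half of a competitor lying on the $M$-side of $S'$ has part of its boundary on $S'$, not on $\Gamma$, so it is not a competitor for the Plateau problem $M$ solves. One would need a symmetrization (or a projection/retraction onto $S'$) argument to repair this, which you neither supply nor reference, and which is not routine in $E(\kappa,\tau)$. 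Third, your vertex bookkeeping miscounts the angle: with $n_v$ copies of an angle $\pi/n_v$ you get $\pi$, not $2\pi$; the dihedral union you describe does not cover a full disc about $v$, and $v$ remains a boundary point. The paper's argument is consistent with this: it sends $v$ to the interior of an edge, not to an interior point, and then applies the Hopf lemma there, not interior regularity.
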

The proof relies on the Hopf boundary lemma:
\begin{proof}
Let us consider an almost conformal harmonic parametrization $f$ of $M$ and a point $p\in\Gamma$ in the interior of an edge with barrier $S'$. Since $S'\cap\overline{M}\subset\Gamma$ the Hopf boundary lemma implies $\diff f_p\ne0$, hence $f$ is an immersion.

We assume vertex $v\in\Gamma$ is a branch point, and consider the union $\Gamma_v$ of $n_v$ copies of $\Gamma$. By assumption there is a barrier $S'$ for $\Gamma_v$ and we are in the first case. Hence, we conclude $M$ extends smoothly without branch points by Schwarz reflection across $\Gamma$.
\end{proof}


\subsection{Sister curves}\label{S:sistercurves}
We want to analyse the geometry of periodic surfaces. Therefore, we take a closer look at the boundary curves of a fundamental piece.

Let $c=f\circ\gamma$ be a curve parametrized by arc length in a hypersurface $M=f(\Omega)\subset\overline{M}$ with (surface) normal $\nu$. The \emph{normal curvature} $k$ and the \emph{normal torsion} $t$ along $c$ are defined by
\[
k\coloneqq \nu\cdot\overline{\nabla}_{c'}c'=-\overline{\nabla}_{c'}\nu\cdot c'=\la S\gamma',\gamma'\ra ,\qquad 
t\coloneqq-\overline{\nabla}_{c'}\nu\cdot Jc'=\la S\gamma',J\gamma'\ra.
\]

Let $\tilde{M}\subset\Sigma(\kappa)\times\R$ denote an mc $H$-surface and $M\subset E(\kappa+4H^2,H)$ its minimal sister. Furthermore, let $\gamma$ be a curve in $\Omega$. We call $\tilde{c}\coloneqq \tilde{f}(\gamma)$ and $c\coloneqq f(\gamma)$ \emph{sister curves}.

\begin{lemma}\label{l:curvtor}
For a pair of sister curves the normal curvature and torsion are related as follows:
$$\tilde{k}=-t+H\quad\text{and}\quad\tilde{t}=k.$$
\end{lemma}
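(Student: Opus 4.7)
The proof is a direct bilinear-algebra computation using the Daniel shape-operator correspondence $\tilde{S} = JS + H\id$ from Theorem~\ref{t:correspondence}. The plan is to substitute this relation into the defining formulas $\tilde{k} = \la \tilde S\gamma',\gamma'\ra$ and $\tilde{t} = \la\tilde S\gamma',J\gamma'\ra$ and then exploit the two elementary facts about the almost complex structure $J$: it is an isometry ($\la JX,JY\ra=\la X,Y\ra$) and it is antisymmetric ($\la JX,Y\ra=-\la X,JY\ra$), with $J^2=-\id$. Implicitly one uses that $\tilde{M}$ and $M$ are isometric via the common parameter domain $\Omega$, so the same curve $\gamma$ and the same $J$ may be employed on both sides, and that $\gamma$ is arc-length parametrized, giving $\la\gamma',\gamma'\ra=1$ and $\la\gamma',J\gamma'\ra=0$.

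For the first identity, I would compute
\begin{align*}
\tilde{k}=\la\tilde{S}\gamma',\gamma'\ra = \la JS\gamma',\gamma'\ra + H\la\gamma',\gamma'\ra
        = -\la S\gamma',J\gamma'\ra + H = -t + H.
\end{align*}
For the second identity, the same substitution yields
\begin{align*}
\tilde{t}=\la\tilde{S}\gamma',J\gamma'\ra = \la JS\gamma',J\gamma'\ra + H\la\gamma',J\gamma'\ra
        = \la S\gamma',\gamma'\ra + 0 = k,
\end{align*}
where the first term uses the isometry property of $J$ and the second vanishes because $J\gamma'\perp\gamma'$.

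There is no serious obstacle here; the whole content of the lemma is already encoded in Daniel's relation together with the orthogonal nature of $J$. The only thing worth being careful about is bookkeeping, namely that the $J$ appearing in the shape-operator formula is the very same $\pi/2$-rotation that enters the definition of the normal torsion, and that the curves $\tilde c$ and $c$ correspond to the same preimage curve $\gamma\subset\Omega$, so that the identifications $\tilde\gamma'=\gamma'=:\gamma'$ at the level of the parameter domain are legitimate. Once these identifications are in place, the two displayed lines above constitute the complete proof.
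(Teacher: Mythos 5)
Your proof is correct and follows exactly the paper's approach: substitute Daniel's shape-operator relation $\tilde{S}=JS+H\id$ into the definitions of $\tilde{k}$ and $\tilde{t}$, then simplify using the antisymmetry and orthogonality of $J$. You have merely spelled out the intermediate algebraic steps that the paper leaves implicit.
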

\begin{proof}
We apply Equation~\eqref{e:shapeoperators} to the definitions:
\begin{align*}
\tilde{k}&=\la \tilde{S}\gamma',\gamma'\ra=\la (JS+H\id)\gamma',\gamma'\ra=-t+H,\\
\tilde{t}&=\la \tilde{S}\gamma',J\gamma'\ra=\la (JS+H\id)\gamma',J\gamma'\ra=k.\qedhere
\end{align*}
\end{proof}
From this follows a relation between mirror curves and their sister curves, see~\cite{GK2010} and~\cite{MT2011}:
\begin{enumerate}
\item A curve $\tilde{c}\subset\tilde{M}\subset\Sigma(\kappa)\times\R$ is a mirror curve in a vertical plane if and only if its sister curve $c\subset M\subset E(\kappa+4H^2,H)$ is a horizontal geodesic.
\item Similarly, $\tilde{c}$ is a horizontal mirror curve if and only if $c$ is a vertical geodesic.
\end{enumerate}

In our construction we consider the \emph{fundamental piece} of a periodic mc $H$-surface. The complete surface is then generated by reflections. The fundamental piece is simply connected and bounded by mirror curves $\tilde{c}_i$ in vertical and horizontal planes. Given a fundamental piece bounded by $n$ arc length parametrized mirror curves $\tilde{c}_i$, it defines the following geometric quantities:
\begin{itemize}
\item The \emph{length} $\tilde{l}_i$ of the mirror curve $\tilde{c}_i$, also denoted by $l(\tilde{c}_i)$ or $\abs{\tilde{c}_i}$.
\item The \emph{vertex angle} $\tilde{\phi}_i$ of two edges $\tilde{c}_i$ and $\tilde{c}_{i+1}$, which satisfies $$\cos\tilde{\phi}_i=-\tilde{c_i}'(\tilde{l}_i)\cdot\tilde{c}_{i+1}'(0).$$
\end{itemize}
The minimal sister surface is bounded by horizontal and vertical geodesics. Since the surfaces are isometric, we have
\begin{equation}\label{e:equality}
\tilde{l}_i=l_i,\qquad\tilde{\phi}_i=\phi_i.
\end{equation}
\begin{itemize}
\item The total \emph{turn} angle of the normal $\tilde{\nu}$ $$\turn_i=\turn_{\tilde{c}_i}(\tilde{\nu})\coloneqq\int_{\tilde{c}_i}\tilde{k},$$ which measures the total turn of the normal relative to a parallel field.
\end{itemize}

Accordingly, we want to measure the rotational angle of the normal $\nu$ along a curve $c$ in $E(\kappa+4H^2,H)$. We detect the \emph{twist} of the normal with respect to an appropiate vector field $X$
\[\twist_c(\nu,X)\coloneqq\int\limits_c \la\nabla_{c'}\nu,c'\times\nu\ra-\la\nabla_{c'}X,c'\times X\ra.\] 
\begin{definition}\label{d:twist}
\begin{enumerate}
\item\label{d:verttw} Let $c\subset M$ be a vertical geodesic in $E(\kappa+4H^2,H)$, then the twist is defined by the total rotation speed of $\nu$ with respect to a basic vector field $\tilde{e}$, i.e. the horizontal lift of any vector field $e$ on $\Sigma(\kappa+4H^2)$:
\[\twist_v\coloneqq\twist_c(\nu,\tilde{e})=\int\limits_c \la\nabla_{c'}\nu,c'\times\nu\ra-\la\nabla_{c'}\tilde{e},c'\times \tilde{e}\ra.\]
\item Let $c\subset M$ be a horizontal geodesic in $E(\kappa+4H^2,H)$, then the twist is defined by the total rotation speed of $\nu$ with respect to the vertical vector field $\xi$:
\[\twist_h\coloneqq\twist_c(\nu,\xi)=\int\limits_c \la\nabla_{c'}\nu,c'\times\nu\ra-\la\nabla_{c'}\xi,c'\times \xi\ra.\]
\end{enumerate}
\end{definition}
With this definition $\twist_v$ measures the angle $\measuredangle(\diff\pi_c(\nu(0)),\diff\pi_{c}(\nu(l))$  in the projection. We drop the index when it is clear whether the geodesic is vertical or horizontal.

\begin{lemma}\label{l:torsionangle}
\begin{enumerate}
\item Let $c\subset M$ be a vertical geodesic in $E(\kappa+4H^2,H)$, whose sister is a horizontal mirror curve $\tilde{c}\subset\tilde{M}$ in $E(\kappa,0)$. Then $$\twist_v=\int\limits_c t+H l(c)\quad\mbox{and}\quad\tilde{k}=2H-\twist_v'.$$
\item Let $c\subset M$ be a horizontal geodesic in $E(\kappa+4H^2,H)$, whose sister is a vertical mirror curve $\tilde{c}\subset\tilde{M}$ in $E(\kappa,0)$. Then $$\twist_h=-\turn.$$
\end{enumerate}
\end{lemma}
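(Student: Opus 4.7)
The plan is to express the integrand $\langle \nabla_{c'}\nu, c' \times \nu\rangle$ appearing in $\twist_c(\nu, X)$ as the normal torsion $t$ of $c$ in $M$, and then compute the reference integrand $\langle \nabla_{c'}X, c'\times X\rangle$ explicitly from the fibration geometry of $E(\kappa+4H^2, H)$. Finally I would combine these with Lemma~\ref{l:curvtor}, which relates the normal torsion of $c$ to the normal curvature of the sister $\tilde{c}$.

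First I would observe that if $\nu$ denotes the unit normal of $M$, oriented so that $\{c', Jc', \nu\}$ is a positively oriented frame along $c$, then $c' \times \nu = -Jc'$. Combined with the definition $t = -\overline{\nabla}_{c'}\nu \cdot Jc'$ of the normal torsion, this yields
\[
\langle \nabla_{c'}\nu, c'\times\nu\rangle = t
\]
along any curve in $M$, so the ``surface'' term in both twist integrals of Definition~\ref{d:twist} equals $\int_c t$.

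Next I would invoke the standard identity $\nabla_Y \xi = \tau\, Y \times \xi$ valid in $E(\kappa,\tau)$, which follows from the formula $\Omega(Y, Z) = 2\langle \nabla_Y \xi, Z\rangle$ derived in Section~\ref{S:riem} together with the definition of $\tau$; in our setting $\tau = H$. For Part~(1), $c'=\xi$ along the vertical geodesic $c$, and the basic vector field $\tilde e$ satisfies $[\xi, \tilde e] = 0$, so
\[
\nabla_{c'}\tilde e = \nabla_\xi \tilde e = \nabla_{\tilde e}\xi = H\,\tilde e \times \xi.
\]
Since $c' \times \tilde e = \xi \times \tilde e = -\tilde e \times \xi$ and $|\tilde e \times \xi|=1$, one gets $\langle \nabla_{c'}\tilde e, c'\times\tilde e\rangle = -H$. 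Integrating produces $\twist_v = \int_c t + H\,l(c)$; differentiating with respect to arc length and applying $\tilde k = -t + H$ from Lemma~\ref{l:curvtor} delivers $\tilde k = 2H - \twist_v'$.

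For Part~(2), $c$ is horizontal, so directly $\nabla_{c'}\xi = H\, c' \times \xi$, and since $c' \times \xi$ is a horizontal unit vector orthogonal to $c'$, $\langle \nabla_{c'}\xi, c' \times \xi\rangle = H$. Therefore $\twist_h = \int_c t - H\,l(c) = -\int_c(-t + H) = -\int_c \tilde k = -\turn$, again by Lemma~\ref{l:curvtor}. The main hazard is bookkeeping: one has to fix orientation conventions so that both $c'\times\nu = -Jc'$ and $\nabla_Y\xi = \tau\,Y\times\xi$ hold simultaneously, and to verify the basic property $[\xi,\tilde e]=0$ so that $\nabla_\xi\tilde e$ can be traded for $\nabla_{\tilde e}\xi$.
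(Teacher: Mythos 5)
Your proof is correct and follows essentially the same route as the paper: identify the surface integrand $\langle\nabla_{c'}\nu, c'\times\nu\rangle$ with the normal torsion $t$, compute the reference integrand from $\nabla\xi$ using the bundle curvature, and finish with Lemma~\ref{l:curvtor}. The paper carries out the reference computation directly in terms of the horizontal $\pi/2$-rotation $R$ rather than packaging it as $\nabla_Y\xi=\tau\,Y\times\xi$, and it does not pause to justify $[\xi,\tilde e]=0$, but both of your refinements are sound (the latter because the flow of the vertical Killing field projects to the identity on the base, hence preserves horizontal lifts), so these are cosmetic differences rather than a genuinely different argument.
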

\begin{proof}
\begin{enumerate}
\item Without loss of generality $c'=\xi$. Let $(c',J c',\nu)$ be positively oriented. Let $J$ and $R$ denote $\pi/2$-rotations in the tangent bundle $\T M$ and the horizontal plane $\diff \pi^{-1}(\T \Sigma(\kappa+4H^2))$, respectively. Then $Jc'$ and $\nu$ are horizontal with $-R\nu=Jc'$.

We denote by $E$ an unit basic vector field. We have
\begin{align*}
\twist_v &=\int\limits_c\left(\la\nabla_{c'}\nu,c'\times\nu\ra-\la\nabla_{c'}E,c'\times E\ra\right)\\
&=\int\limits_c\left(\la\nabla_{c'}\nu,R\nu\ra-\la\nabla_{c'}E,R E\ra\right)\\
&=\int\limits_c\left(-\la\nabla_{c'}\nu,J c'\ra-\la\nabla_{\xi}E,R E\ra\right)\\
&=\int\limits_c\left(t+H\la RE,R E\ra\right)\\
&=\int\limits_c t+Hl(c).
\end{align*}
Lemma~\ref{l:curvtor} implies that $\tilde{k}=2H-\twist_v'$.
\item The rotational angle of the tangent plane $\T _{c(t)}M$ is measured by the rotational speed with respect to $\xi$:
\[
\twist_h'=\la\nabla_{c'}\nu,\underbrace{c'\times\nu}_{=-Jc'}\ra-\la\underbrace{\nabla_{c'}\xi}_{=-HRc'},\underbrace{c'\times\xi}_{=-Rc'}\ra=t-H=-\tilde{k}.
\] 
By integrating along $c$ we get \[\twist_h=-\turn.\]
\end{enumerate}
\end{proof}
\begin{example}
We compute the torsion and the twist of a vertical geodesic $c$ in a vertical plane in $E(\kappa,\tau)$. The geodesic $c$ is a fibre of the Riemannian fibration. Without loss of generality $c'=\xi$, then we get
\begin{align*}
t&=-\nabla_{c'}\nu\cdot Jc'\\
&=\tau R\nu\cdot Jc'=-\tau.
\end{align*}
Namely, the torsion of a vertical geodesic is the negative of the bundle curvature.

Moreover, for the twist we get, as expected
\[
\twist=\int\limits_c t+\tau l(c)=0.
\]
Hence, with respect to parallel fields, the normal does not rotate. Equivalently the normal is constant in the projection.
\end{example}

We shall apply Lemma~\ref{l:torsionangle} to obtain detailed information about vertical geodesics in $\Nil$ and their horizontal sister curves in $\H^2\times\R$. This strategy is due to Laurent Mazet, for whom the author is very grateful.

For a curve $\tilde{c}\subset\H^2$, consider the unique horocycle foliation $\mathcal{F}_{\tilde{c}}$ given by the horocycle that is tangent to $\tilde{c}$ in $\tilde{c}(0)$ and has curvature $1$ with respect to the normal $n$ of $\tilde{c}$. Let $\theta$ be the angle defined by $\tilde{c}^{\prime}=\cos\theta e_1+\sin\theta e_2$, where the orthonormal frame $(e_1, e_2)$ is given by the tangent and minus the normal of the horocycles. By the definition of the considered foliation we have $\theta(0)=0$ and $n=\sin\theta e_1-\cos\theta e_2.$
\begin{figure}[h]
\begin{center}
\psfrag{0}{$y=0$}
\psfrag{1}{$e_1$}
\psfrag{2}{$e_2$}
\psfrag{n}{$n$}
\psfrag{c}{$\tilde{c}^{\prime}$}
\psfrag{t}{$\theta$}
\includegraphics[width=7cm]{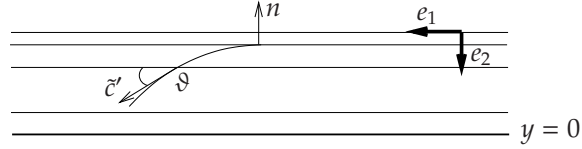}
\end{center}
\caption{Horocycle foliation $\mathcal{F}_{\tilde{c}}$ given by $\tilde{c}$.}\label{f:foliation}
\end{figure}
\begin{lemma}\label{l:curv}
The curvature of $\tilde{c}$ is given by $$\tilde{k}=\cos\theta-\theta'.$$
\end{lemma}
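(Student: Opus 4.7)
The plan is to compute $\tilde{k} = \la\nabla_{\tilde{c}'}\tilde{c}', n\ra$ directly in the moving frame $(e_1, e_2)$ supplied by the horocycle foliation $\mathcal{F}_{\tilde{c}}$. Since $\tilde{c}' = \cos\theta\, e_1 + \sin\theta\, e_2$, once the Levi-Civita connection is known on this frame the result will follow by routine expansion.

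The first key step is to read the connection off from the geometry of $\mathcal{F}_{\tilde{c}}$. Because the horocycles are integral curves of $e_1$ with geodesic curvature $1$ measured against $-e_2$, we obtain $\nabla_{e_1}e_1 = -e_2$, and orthonormality then forces $\nabla_{e_1}e_2 = e_1$. Next, the orthogonal trajectories of a horocycle foliation in $\H^2$ are exactly the geodesics emanating from the common ideal point, so $e_2$ is the unit tangent to a geodesic, giving $\nabla_{e_2}e_2 = 0$ and hence $\nabla_{e_2}e_1 = 0$ again by orthonormality. In effect the connection $1$-form of the frame is concentrated in the $e_1$-direction and constant there.

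The second step is to push these formulas forward along $\tilde{c}$, yielding $\nabla_{\tilde{c}'}e_1 = -\cos\theta\, e_2$ and $\nabla_{\tilde{c}'}e_2 = \cos\theta\, e_1$. Differentiating $\tilde{c}' = \cos\theta\, e_1 + \sin\theta\, e_2$ and pairing the result with $n = \sin\theta\, e_1 - \cos\theta\, e_2$, the various cross-terms collapse via $\sin^2\theta + \cos^2\theta = 1$ to give exactly $\tilde{k} = \cos\theta - \theta'$.

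The only genuinely conceptual (as opposed to bookkeeping) point is isolating the two geometric facts that pin down the connection on the frame: the unit geodesic curvature of the horocycles, and the geodicity of their orthogonal trajectories. Once these are in place the rest of the computation is forced, so I do not anticipate a serious obstacle; the main risk is keeping track of signs so that the convention $\theta(0) = 0$ and $n = \sin\theta\, e_1 - \cos\theta\, e_2$ is respected throughout.
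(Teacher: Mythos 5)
Your proof is correct and follows essentially the same route as the paper: both compute $\nabla_{\tilde{c}'}\tilde{c}'$ in the frame $(e_1,e_2)$ using the connection relations $\nabla_{e_1}e_1=-e_2$, $\nabla_{e_1}e_2=e_1$, $\nabla_{e_2}e_1=\nabla_{e_2}e_2=0$, and then pair with $n=\sin\theta\,e_1-\cos\theta\,e_2$. The only difference is that you spell out where those connection formulas come from (unit geodesic curvature of horocycles, geodesicity of the orthogonal trajectories, orthonormality), which the paper simply asserts under the braces.
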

\begin{proof}
A simple computation gives:
\begin{align*}
\nabla_{\tilde{c}^{\prime}}\tilde{c}^{\prime}=&\nabla_{\cos\theta e_1+\sin\theta e_2}(\cos\theta e_1+\sin\theta e_2)\\
=&-\theta'\sin\theta e_1+\cos\theta\nabla_{(\cos\theta e_1+\sin\theta e_2)}e_1+\theta'\cos\theta e_2+\sin\theta\nabla_{(\cos\theta e_1+\sin\theta e_2)}e_2\\
=&-\theta'\sin\theta e_1+\cos^2\theta\underbrace{\nabla_{e_1}e_1}_{-e_2}+\cos\theta\sin\theta \underbrace{\nabla_{e_2}e_1}_{=0}\\
&+\theta'\cos\theta e_2+\sin\theta\cos\theta\underbrace{\nabla_{e_1}e_2}_{e_1}+\sin^2\theta \underbrace{\nabla_{e_2}e_2}_{0}\\
=&-\theta'\sin\theta e_1-\cos^2\theta e_2+\theta'\cos\theta e_2+\sin\theta\cos\theta e_1\\
=&(-\theta'+\cos\theta)\sin\theta e_1-(-\theta'+\cos\theta)(\cos\theta e_2)\\
=&(\cos\theta-\theta')n.
\end{align*}\qedhere
\end{proof}
We want to control the curve $\tilde{c}$ by means of its sister $c$ in $M\subset E(0,1/2)=\Nil$. Let $\alpha(t)=\twist_v$ measure the twist in $c(t)$ with respect to a basic vector field chosen such that $\alpha(0)=0$, i.e. $\tilde{e}(c(0))=\tilde{\nu}(c(0))$.
\begin{proposition}\label{p:anglecompare}
Let $\tilde{c}\subset\H^2\times\R$ be the horizontal sister curve of a vertical geodesic $c$ in $E(0,1/2)=\Nil$. The angle $\theta$ given by the horocycle foliation $\mathcal{F}_{\tilde{c}}$ and the rotational speed $\alpha'$ of the minimal surface normal along $c$ are related as follows: \[\theta'=\alpha'+\cos\theta-1,\quad\text{ and }\quad \theta\leq\alpha,\]
where $\alpha(t)$ measures the angle between $\nu$ and any parallel field chosen such that $\alpha(0)=0$.
\end{proposition}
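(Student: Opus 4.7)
The plan is to combine the two distinct expressions we have for the geodesic curvature $\tilde{k}$ of $\tilde{c}$: one from the horocycle foliation (Lemma~\ref{l:curv}), the other from the sister correspondence (Lemma~\ref{l:torsionangle}). Equating them will produce the claimed ODE, after which an elementary comparison argument yields the inequality.

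First I translate the sister side. Since $\tilde{c} \subset \tilde{M} \subset \H^2\times\R$ is the horizontal mirror curve sister to the vertical geodesic $c \subset M \subset \Nil = E(0,1/2)$, Lemma~\ref{l:torsionangle}(1) applied with $H=1/2$ gives
\[
\tilde{k} = 2H - \twist_v' = 1 - \alpha',
\]
where the identification $\alpha'=\twist_v'$ is built into the hypotheses: the basic vector field $\tilde{e}$ along $c$ is chosen with $\tilde{e}(c(0))=\tilde{\nu}(c(0))$, so that $\alpha(t)$ coincides with $\twist_v$ on the arc $c|_{[0,t]}$ and hence has the same derivative.

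Next I translate the planar side. Since $\tilde{c}$ is horizontal it lies in a slice $\H^2\times\{t_0\}$, and Lemma~\ref{l:curv} applied to the horocycle foliation $\mathcal{F}_{\tilde{c}}$ gives $\tilde{k}=\cos\theta-\theta'$. Equating the two expressions yields $\cos\theta-\theta' = 1-\alpha'$, i.e.\ $\theta'=\alpha'+\cos\theta-1$, which is the first claim. The inequality follows immediately: $\cos\theta-1\le 0$ implies $\theta'\le\alpha'$, and since $\theta(0)=\alpha(0)=0$ by construction (the foliation is tangent to $\tilde{c}$ at $\tilde{c}(0)$, and the basic field is chosen to agree with $\tilde{\nu}$ at $c(0)$), integrating from $0$ to $t$ gives $\theta(t)\le\alpha(t)$.

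The one delicate point is to reconcile the two sign conventions for $\tilde{k}$: in Lemma~\ref{l:curv} it is measured against the horocycle-foliation normal $n$, whereas in Lemma~\ref{l:torsionangle} it is measured against the surface normal $\tilde{\nu}$. Because $\tilde{c}$ is a \emph{horizontal} mirror curve, $\tilde{M}$ meets the slice $\H^2\times\{t_0\}$ orthogonally along $\tilde{c}$, so $\tilde{\nu}|_{\tilde{c}}$ is (up to a single global sign) the in-slice unit normal to $\tilde{c}$ in $\H^2$. Fixing the orientation of $\nu$ along $c$ in $\Nil$ compatibly, the two expressions for $\tilde{k}$ refer to the same scalar, and the identification above is legitimate; the rest of the argument is the elementary ODE comparison.
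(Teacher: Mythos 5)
Your proposal is correct and follows exactly the paper's argument: equate the two expressions for $\tilde{k}$ from Lemma~\ref{l:curv} and Lemma~\ref{l:torsionangle} to obtain the ODE, then observe $\cos\theta-1\le 0$ and integrate with $\theta(0)=\alpha(0)=0$. Your extra paragraph reconciling the two normal conventions is a sensible sanity check but does not change the substance of the argument.
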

\begin{proof}
We have seen in Lemma~\ref{l:torsionangle} that $\tilde{k}=1-\alpha'$. Together with Lemma~\ref{l:curv} we get $$\theta'=\alpha'+\cos\theta-1,$$ namely $\theta'\leq\alpha'$. In particular, along the curves $c$ and $\tilde{c}$ we have $$\int_{\tilde{c}}\theta^{\prime}\leq\int_c\alpha^{\prime}\Rightarrow \theta(t)\leq\alpha(t).$$\qedhere
\end{proof}

\section{Reference surfaces}\label{S:refsurfaces}
\subsection{Horizontal helicoids in $\Nil_3$}\label{S:conormalhoriheli}
In the construction of the $k$-noid with genus $1$ from Section~\ref{S:genus1}, we use as a barrier the horizontal helicoid $H_\alpha(u,v)$ in $\Nil$, $\left(\R^3,\diff x_1^2+\diff x_2^2+(\diff x_3-x_1\diff x_2)^2\right)$ by Daniel and Hauswirth~\cite[Section 7]{DH2009}. For $\alpha>0$ the coordinates of the helicoid $H_\alpha$ are given in terms of the solution $\psi$ of the ordinary differential equation $\psi'^2=\alpha^2+\cos^2\psi,\psi(0)=0$:
\begin{align*}
x_1&=\frac{\sinh(\alpha v)}{\alpha(\psi'(u)-\alpha)}\cos\psi(u)\\
x_2&=-G(u)\\
x_3&=\frac{-\sinh(\alpha v)}{\alpha(\psi'(u)-\alpha)}\sin\psi(u),
\end{align*}
where $G$ is defined by $G'(u)=1/(\psi'(u)-\alpha), G(0)=0$. In~\cite{DH2009} was shown that the function $\psi$ is a decreasing odd bijection. There exists a unique $U\coloneqq U(\alpha)>0$ with $\psi_\alpha(U)=-\pi/2$, $\psi_\alpha(-U)=\pi/2$. To visualise the surface, we look at three curves in the helicoid:
\begin{align*}
H_\alpha(-U,v)&=\left(0,G(U),\frac{\sinh(\alpha v)}{\alpha(\alpha-\psi'(-U))}\right)\\
H_\alpha(0,v)&=\left(\frac{\sinh(\alpha v)}{\alpha(\psi'(0)-\alpha)},0,0\right)\\
H_\alpha(U,v)&=\left(0,-G(U),\frac{\sinh(\alpha v)}{\alpha(\psi'-\alpha)}\right).
\end{align*}
The rulings $H_\alpha (\pm U,v)$ are vertical and
define the width $a\coloneqq G(-U)-G(U)$ of the helicoid. The width is well-defined for the whole helicoid, since $\psi$ is periodic: $\psi(u+2U)=\psi(u)-\pi$.
\begin{figure}[h]
\begin{center}
\psfrag{1}{$x_1$}
\psfrag{2}{$x_2$}
\psfrag{3}{$x_3$}
\psfrag{+}{$-G(U)$}
\psfrag{-}{$-G(-U)$}
\includegraphics[width=5.6cm]{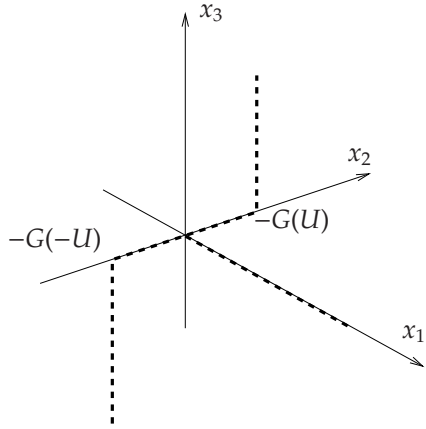}\end{center}
\caption{Sketch of a fundamental piece of the horizontal helicoid from Daniel and Hauswirth in $\Nil$, $v\leq0$.}\label{f:horiheli}
\end{figure}

To ensure that we can consider a helicoid $H_\alpha$ for a given width $a$, we need the following lemma:
\begin{lemma}\label{l:a0alphainf}
For $a>0$, there exists $\alpha>0$ such that $$-2G(U(\alpha))=a,$$ where $U(\alpha)$ is defined by $\psi_\alpha(U)=-\pi/2$. Furthermore, for $a\to 0$ we have $\alpha\to\infty$.
\end{lemma}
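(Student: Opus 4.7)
The plan is to convert the equation $-2G(U(\alpha))=a$ into an explicit equation $A(\alpha)=a$ for a continuous function on $(0,\infty)$, and then conclude via the intermediate value theorem after computing the limits at $\alpha\to 0^+$ and $\alpha\to\infty$.

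First I would rewrite the width as an integral in the $\psi$-variable. Since $\psi$ is a decreasing odd bijection, the ODE $\psi'^2=\alpha^2+\cos^2\psi$ forces $\psi'=-\sqrt{\alpha^2+\cos^2\psi}$, which never vanishes. Substituting $du=d\psi/\psi'$ in the definition $G(U)=\int_0^U du/(\psi'-\alpha)$, using $\psi(U)=-\pi/2$, and exploiting that $\cos^2$ is even, one obtains
\[
A(\alpha)\coloneqq -2G(U(\alpha))=2\int_0^{\pi/2}\frac{d\psi}{\sqrt{\alpha^2+\cos^2\psi}\bigl(\sqrt{\alpha^2+\cos^2\psi}+\alpha\bigr)}.
\]
The integrand is bounded and continuous on $[0,\pi/2]$; on any compact subinterval $[\alpha_1,\alpha_2]\subset(0,\infty)$ it is dominated by $1/(2\alpha_1^2)$, so dominated convergence yields the continuity of $A$ on $(0,\infty)$.

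Second, I would estimate the two limits. Using $\sqrt{\alpha^2+\cos^2\psi}\geq\alpha$, the integrand is at most $1/(2\alpha^2)$, so $A(\alpha)\leq\pi/(2\alpha^2)\to 0$ as $\alpha\to\infty$. On the other hand, as $\alpha\to 0^+$ the integrand increases monotonically to $\sec^2\psi$ on $(0,\pi/2)$, and since $\int_0^{\pi/2}\sec^2\psi\,d\psi=+\infty$, monotone convergence gives $A(\alpha)\to+\infty$.

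The intermediate value theorem then delivers, for every prescribed $a>0$, some $\alpha>0$ with $A(\alpha)=a$. For the asymptotic claim $\alpha\to\infty$ as $a\to 0$, I would argue by contradiction: if a sequence of solutions $\alpha_n$ with $A(\alpha_n)=a_n\to 0$ remained bounded, a subsequential limit $\alpha_\infty\in[0,\infty)$ would produce either $A(\alpha_n)\to A(\alpha_\infty)>0$ (if $\alpha_\infty>0$) or $A(\alpha_n)\to+\infty$ (if $\alpha_\infty=0$), both contradicting $a_n\to 0$. The only delicate point in the whole argument is the sign bookkeeping when changing variables from $u$ to $\psi$ (the orientation reversal due to $\psi$ being decreasing); after that, every step is routine.
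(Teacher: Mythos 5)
Your proof is correct, and it streamlines the paper's argument in a useful way. The paper proceeds in two separate steps: first it shows that, for each fixed $\alpha$, the function $G=G_\alpha$ is a decreasing unbounded odd function (so $-2G_\alpha$ is a bijection from $\R_+$ to $\R_+$); then it computes $U(\alpha)=K\bigl(1/\sqrt{\alpha^2+1}\bigr)/\sqrt{\alpha^2+1}$ as a complete elliptic integral, shows continuity, and checks $U(\alpha)\to\infty$ as $\alpha\to0$ and $U(\alpha)\to0$ as $\alpha\to\infty$. Combining the two steps is however slightly delicate, because $G$ itself depends on $\alpha$, a point the paper does not spell out. You avoid this by changing variables $u\mapsto\psi$ in $G_\alpha(U(\alpha))=\int_0^{U(\alpha)}du/(\psi'-\alpha)$, using $\psi'=-\sqrt{\alpha^2+\cos^2\psi}$ (the sign being fixed by $\psi$ decreasing, not merely by the ODE), and producing the explicit, manifestly $\alpha$-dependent expression
\[
A(\alpha)=2\int_0^{\pi/2}\frac{d\psi}{\sqrt{\alpha^2+\cos^2\psi}\,\bigl(\sqrt{\alpha^2+\cos^2\psi}+\alpha\bigr)}.
\]
Your estimate $A(\alpha)\le\pi/(2\alpha^2)$ and the monotone-convergence blow-up $A(\alpha)\to\infty$ as $\alpha\to0^+$ then replace the paper's separate unboundedness-of-$G$ and elliptic-integral arguments, and the IVT plus the subsequence contradiction give both parts of the statement cleanly. (In fact $A$ is strictly decreasing in $\alpha$, since the integrand is, so you could also deduce uniqueness of $\alpha$ and the limit $\alpha\to\infty$ directly from bijectivity, but your contradiction argument works just as well without noting monotonicity.) Both proofs extract the same information from the ODE; yours packages it into a single formula, which makes the continuity and limit computations routine and closes the gap in how the paper glues its two steps.
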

\begin{proof}
The idea of the proof is to show that $G$ is a bijection on $\R$ first. The second step is to show that $U$ is a continuous map to $\R_+$.

Step 1: The function $G$ is odd, so $G(-U)-G(U)=-2G(U)$. For $a>0$ we show that there exists exactly one $U>0$ such that $-2G(U)=a$: From $G'=1/(\psi'-\alpha)<0$ we know that $G$ is a decreasing function on $\R$. If we assume that $G$ is bounded, i.e. $G(u)\to g\in\R$ for $u\to\infty$, then $G'(u)\to 0$ for $u\to\infty$. But this implies $\psi'(u)-\alpha\to-\infty$ for $u\to\infty$, which is a contradiction because $\psi'^2=\alpha^2+\cos^2\psi\leq\alpha^2+1$ is bounded. Therefore, $G$ is a decreasing bijection on $\R$.

Step 2: We show for $U>0$ the existence of $\alpha>0$ such that the solution $\psi_\alpha$ of \[\psi'^2_{\alpha}=\alpha^2+\cos^2\psi_\alpha,\quad\psi_\alpha(0)=0\] satisfies $\psi_\alpha(U)=-\pi/2$.

By applying seperation of variables to the ODE $\psi'_\alpha=-\sqrt{\alpha^2+\cos^2\psi},\,\psi(0)=0$, the solution is then given by the inverse of the elliptic integral of the first kind
\[\int\limits_0^\psi -\frac{1}{\sqrt{\alpha^2+\cos^2\theta}}\diff\theta.\] 
We are interested in $U(\alpha)$ given by $\psi_\alpha(U)=-\pi/2$:
\begin{align*}
U(\alpha)&=\int\limits_0^{-\pi/2} -\frac{1}{\sqrt{\alpha^2+\cos^2\theta}}\diff\theta\\
&=\int\limits_0^{\frac\pi 2} \frac{1}{\sqrt{\alpha^2+\cos^2\theta}}\diff\theta\\
&=\frac{1}{\sqrt{\alpha^2+1}}\int\limits_0^{\frac\pi 2} \frac{1}{\sqrt{1-\frac{1}{\alpha^2+1}\sin^2\theta}}\diff\theta\\
&=\frac{K\left(1/\sqrt{\alpha^2+1}\right)}{\sqrt{\alpha^2+1}},
\end{align*}
where $K(k)=\int\limits_0^{\pi/2}\frac{\diff\theta}{\sqrt{1-k^2\sin^2\theta}}=\int\limits_0^1\frac{\diff t}{(1-t^2)(1-k^2t^2)}$ denotes the complete elliptic integral of the first kind, defined for $k\in[0,1)$, with the special values $K(0)=\pi/2$ and $\lim_{k\to 1}K(k)=\infty$. For $\alpha\to0$ we have $K\left(1/\sqrt{\alpha^2+1}\right)\to\infty$ and for $\alpha\to\infty$ we have $K\left(1/\sqrt{\alpha^2+1}\right)\to\pi/2$.
Therefore, $U$ is continuous because $K$ is. Moreover, for $\alpha\to0$ we have $U(\alpha)\to\infty$ and for $\alpha\to\infty$ we have $U(\alpha)\to0$.

Together with the Step 1, this concludes the first part of the lemma. For the second part, notice that $a\to0$ implies $U\to0$, since $G$ is a decreasing function with $G(0)=0$. Furthermore, $U(\alpha)\to0$ implies $\alpha\to\infty$, since $K$ is bounded.
\end{proof}

We want to express the height $b$ of the conormal $\eta$ of the helicoid $H_\alpha$ along the vertical rulings depending on the width $a$ and the angle $\phi$ in the horizontal plane $\operatorname{span}\{\partial x_1,\partial x_2+x_1\partial x_3\}$ given by 
\[\cos\phi=\la\partial x_1,\eta\ra.\] 
The conormal along the horizontal ruling \[H_\alpha(-U,v)=\left(0,G(U),\frac{\sinh(\alpha v)}{\alpha(\psi'(-U)-\alpha)}\right)\] is given by 
\[\frac{\partial H_\alpha}{\partial u}(-U,v)=\left(\frac{-\sinh(\alpha v)\psi'(-U)}{\alpha(\psi'(-U)-\alpha)},-G'(-U),0\right),\quad 
\nu=\frac{\partial_u H_\alpha}{\norm{\partial_u H_\alpha}}.\] The conormal along $H_\alpha(-U,v)$ is horizontal, since $x_1=0$. We may express $\phi$ in terms of $(\alpha,U=U(\alpha))$. By~\cite{DH2009} we have \[\psi'(-U)=G'(-U)\cos^2\psi(-U)-\alpha=-\alpha\quad \text{and}\quad G'(-U)=\frac{1}{\psi'(-U)-\alpha}=\frac{-1}{2\alpha}.\] Therefore \[\eta=\frac{2\alpha^2}{\sqrt{\alpha^2(\sinh^2(\alpha v)+1)}}\partial_u H_\alpha(-U,v)=\frac{2\alpha}{\cosh(\alpha v)}\partial_u H_\alpha(-U,v)\] and 
\[\cos\phi=\frac{2\alpha^2\sinh(\alpha v)}{-2\alpha^2\cosh(\alpha v)}=\tanh(-\alpha v),\,v\leq0\Leftrightarrow v=\frac{-1}{2\alpha}\ln\left(\frac{1+\cos\phi}{1-\cos\phi}\right).\] 
Using this we get the height $b$ of the conormal $\eta$, in terms of $\phi$ and $(\alpha,U(\alpha))$ for fixed $\alpha>0$, as follows:
\[b=\frac{\sinh\left(\frac{-1}{2}\ln\left(\frac{1+\cos\phi}{1-\cos\phi}\right)\right)}{2\alpha^2}\leq0.\]
One readily sees that $\phi\to0$ implies $b\to-\infty$, and $b\to0$ when $\phi\to\pi/2$.

\begin{remark}
For each $\alpha>0$ the fundamental piece of the helicoid $H_\alpha$ is a section of the Riemannian fibration $\pi\colon\Nil_3\to\R^2$ defined on 
$\R\times\left(-G(-U),-G(U)\right)\subset\R^2$.
\end{remark}

\subsection{Constant mean curvature $k$-noids in $\Sigma(\kappa)\times\R$}\label{S:knoid}

In~\cite[Section 5.]{GK2010} Gro\ss{}e-Brauckmann and Kusner sketched the construction of an one-parameter family of surfaces with constant mean curvature $H\geq0$ in $\Sigma(\kappa)\times\R$ with $\kappa\leq0$, which have $k$ ends, dihedral symmetry and genus zero. 

Their idea was to consider a sequence of compact Plateau solutions $M_{(r,s)}$, which represent sections in $E(\kappa+4H^2,H)$. Each minimal disc $M_{(r,s)}$ is bounded by horizontal and vertical geodesics, see Figure~\ref{f:knoid}. Let $\Gamma_{(r,s)}$ denote the boundary. The minimal surface $M_{(r,s)}$ is a section of the trivial line bundle \[\pi\colon\Omega_r\subset E(\kappa+4H^2,H)\to\Delta_r,\] where $\Omega_r\coloneqq\pi^{-1}(\Delta_r)$ is a mean convex domain, which is defined as the preimage of a triangle $\Delta_r\subset\Sigma(\kappa+4H^2)$. The triangle $\Delta_r$ is given by a hinge of lengths $a$ and $r$, enclosing an angle $\pi/k$. The parameter $a$ determines the length of the horizontal edge in the boundary of $M$, it corresponds to the necksize in the cmc sister.

\begin{figure}[h]\begin{center}
\psfrag{a}{$a$}
\psfrag{r}{$r$}
\psfrag{pr}{$\pi$}
\psfrag{s}{$s$}
\psfrag{phi}{$\pi/k$}
\includegraphics[width=4.5cm]{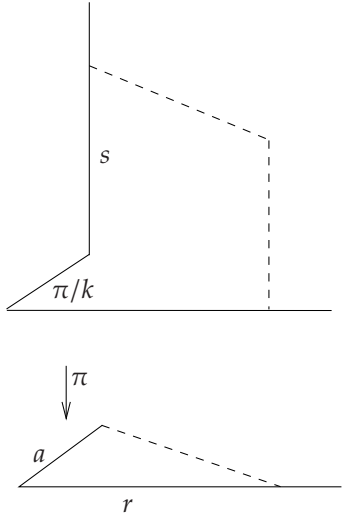}
\caption{The boundary of the minimal disc in $E(\kappa+4H^2,H)$.}\label{f:knoid}\end{center}\end{figure}

In order to show that the sequence of compact minimal surfaces $M_{(r,s)}$ has a minimal surface $M=M(a,k)$ with infinite boundary $\Gamma$ as a limit, such that $M$ is a section projecting to $\Delta\coloneqq\lim_{r\to\infty}\Delta_r$, and $M$ extends without branch points by Schwarz reflection about the edges of $\Gamma$, one has to show that there exist barriers. The proof is equivalent to the one of Theorem~\ref{t:plateauinfinite} below. The complete cmc surface is obtained by considering the sister and using Schwarz reflection.

\section{Constant mean curvature $k$-noids with genus $1$}\label{S:genus1}

We construct surfaces with mc $1/2$ in $\H^2\times\R$ with $k$ ends and genus $1$. Each surface has $k$ vertical symmetry planes and one horizontal symmetry plane, where $k\geq 3$. The idea is to solve a Plateau problem of disc type in $\Nil_3(\R)$, where the disc is bounded by geodesics. Its sister disc in $\H^2\times\R$ generates an mc $1/2$ surface by reflections about horizontal and vertical planes. The problem is to define the geodesic contour such that the sister has the desired properties.

\subsection{Boundary construction}

In $\H^2\times\R$ the desired boundary is connected and consists of four mirror curves in three symmetry planes; the two vertical symmetry planes form an angle $\pi/k$, see Figure~\ref{f:genus1contour}.
\begin{figure}[h]
\begin{center}
\psfrag{c1}{$c_1$}
\psfrag{c2}{$c_2$}
\psfrag{a}{$a$}
\psfrag{n}{$n$}
\psfrag{pr}{$\pi$}
\psfrag{phi}{$\phi$}
\psfrag{pi}{$\pi/k$}
\psfrag{ct1}{$\tilde{c}_1$}
\psfrag{ct2}{$\tilde{c}_2$}
\psfrag{p}{$\hat{p}_1$}
\includegraphics[width=0.5\textwidth]{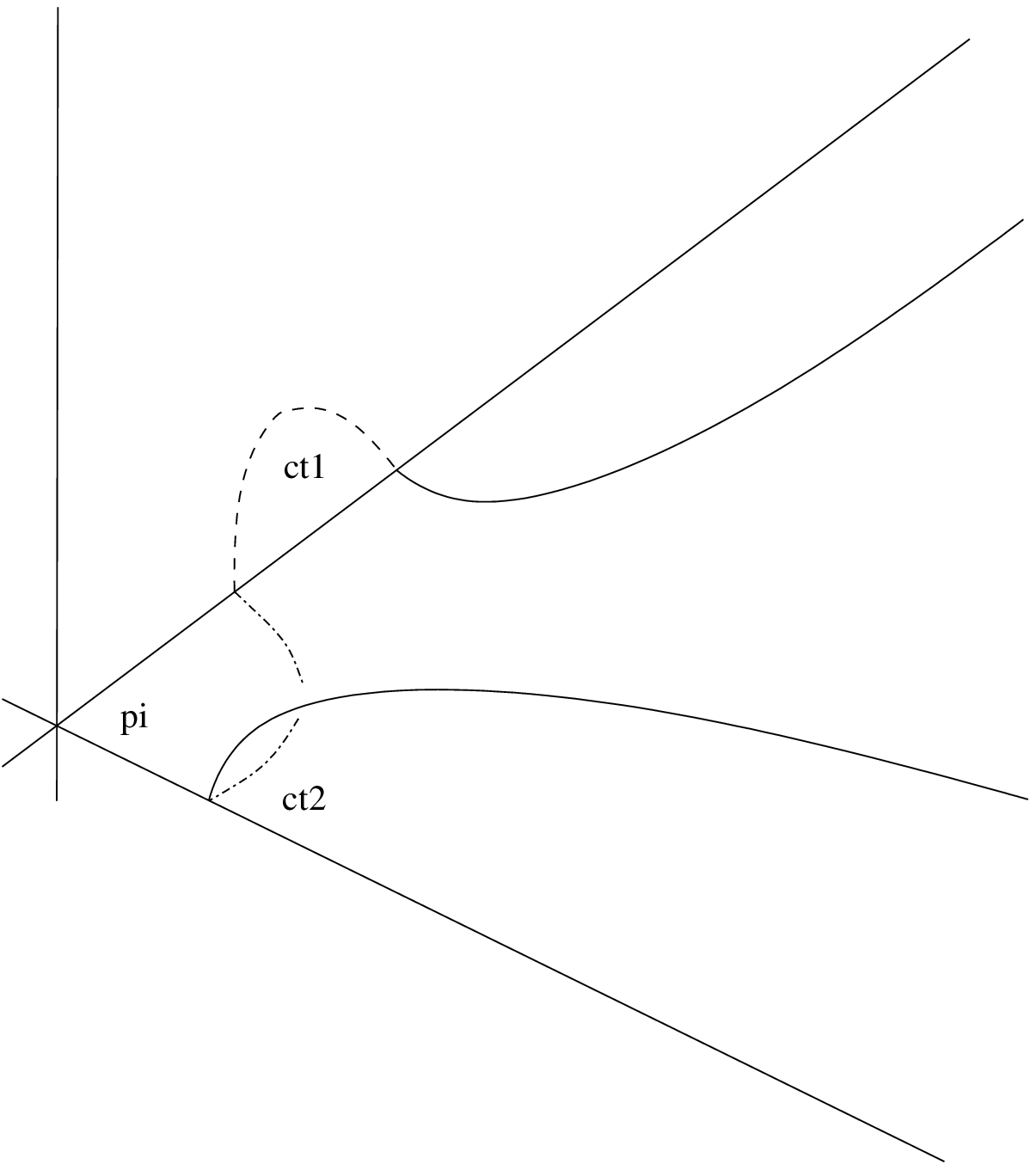}\hspace{1cm}
\includegraphics[width=0.4\textwidth]{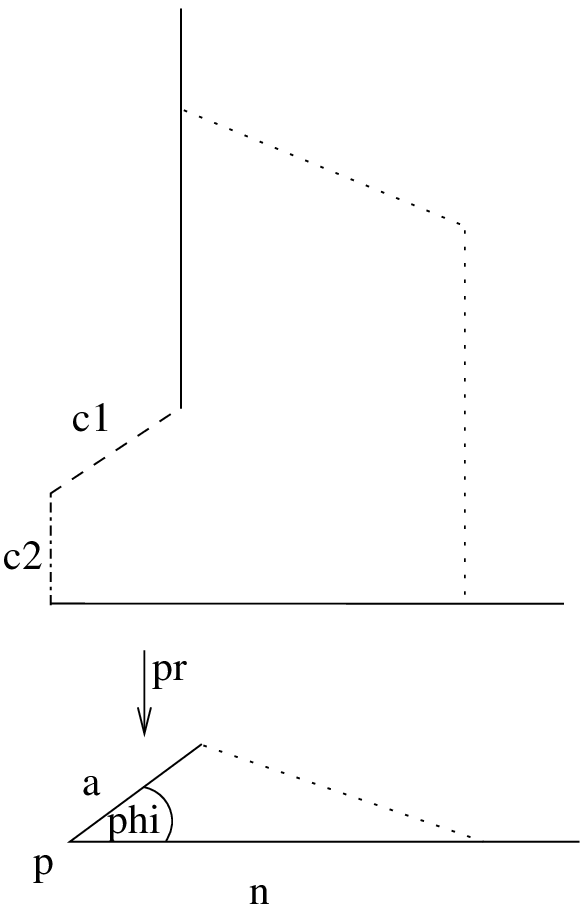}\end{center}
\caption{The desired boundary of the $1/2$-surface in $\H^2\times\R$ and its minimal sister surface in $\Nil$.}\label{f:genus1contour}
\end{figure}

The sister surface is bounded by a geodesic contour $\Gamma\coloneqq\Gamma(a,b,\phi)$: The horizontal mirror curves correspond to vertical geodesics and the vertical mirror curves correspond to horizontal geodesics, their projections enclose an angle $\phi\in\left(0,\pi\right)$.

The length $a>0$ of the finite horizontal geodesic $c_1$ determines the asymptotic parameter of the ends of the $k$-noid. The length $b>0$ of the vertical finite edge $c_2$ defines the size of the hole of the $k$-noid: For $b\to 0$ the $k$-noid is close to the non-degenerate $k$-noid from above, cf. Section~\ref{S:knoid}. The angle $\phi$ measures the curvature of $\tilde{c}_2$. The parameters will be determined by solving the period problems.

To construct a minimal surface that is bounded by $\Gamma$, we truncate the infinite contour $\Gamma$ and get closed Jordan curves $\Gamma_n$, $n>0$. We solve the Plateau problem for the closed Jordan curves and obtain a sequence of compact minimal surfaces. Afterwards we show there exists a minimal surface as a limit.

To define the closed Jordan curves $\Gamma_n$ we consider a geodesic triangle $\Delta_n$ in the base manifold $\R^2$ of the Riemannian fibration of $\Nil_3(\R)$:  Two edges of lengths $a$ and $n$ form an angle $\phi$ and intersect in a point $\hat{p}_1$. We lift $\hat{p}_1$ and the corresponding edge of length $n$ horizontally and label the vertices with $p_1$ and $p_6$. Then we add a vertical arc of length $b$ at $p_1$ in fibre-direction $\xi$ and label its endpoint with $p_2$. We lift the edge of length $a$ of the base triangle in $\R^2$ horizontally, such that it starts in $p_2$; the other vertex is labelled by $p_3$. We add another vertical edge in fibre-direction $\xi$: it starts in $p_3$, has length $n^2$ and its end vertex is called $p_4$. By lifting the remaining edge of the base triangle horizontally, such that it starts in $p_4$ and inserting another vertical edge with endpoints $p_5$ and $p_6$ we complete the special Jordan curve $\Gamma_n$.

The vertical distances do not sum up: $d(p_1,p_2)+d(p_3,p_4)\ne d(p_5,p_6)$, but we claim $p_6 p_5$ is in fibre direction. We consider an arc length parametrization $\gamma$ of $\partial\Delta_n\subset\R^2$ which runs counter-clockwise. By Lemma~\ref{l:vertdistances} the vertical distance of its horizontal lift is $\vol(\Delta_n)$. By construction we get
\[ d(p_6,p_5)=b+n^2-\vol(\Delta_n).\]
Since $\vol(\Delta_n)$ grows linearly, there exists $N\in\N$ such that for all $n\geq N$: $d(p_6,p_5)>0$ and $d(p_6,p_5)\to\infty$ for $n\to\infty$.

The polygon $\Gamma_n$ has six right angles; its projection $\Delta_n$ is convex for every $n$ and has one fixed angle $\phi\leq\pi$ independent of $n$. We define a mean convex set $\Omega_n\coloneqq\pi^{-1}(\Delta_n)\subset \Nil_3(\R)$.

For $n\to\infty$ we have $\Gamma_n\to\Gamma$ in the sense that $\Gamma_n\cap K_x=\Gamma\cap K_x$ for any compact neighborhood  $K\subset\Nil_3$ for $x\in\Gamma$ and $n$ large enough. 

\subsection{Plateau solutions}\label{SS:Plateau}
To control the Plateau solution for $\Gamma$, we solve the Plateau problem for $\Gamma_n$ first:

\begin{lemma}\label{l:plateaufinite}
The special Jordan curve $\Gamma_n\subset \Nil_3(\R)$ bounds a unique Plateau solution $M_n$. It is a section over $\Delta_n$ and extends smoothly without branch points by Schwarz reflection about the edges of $\Gamma_n$.
\end{lemma}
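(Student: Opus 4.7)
The plan is to combine four already-established ingredients: Morrey's extension of Douglas--Rad\'{o} to homogeneously regular Riemannian manifolds for existence of an area-minimising disc, Meeks--Yau for embeddedness in mean convex domains, the section criterion from~\cite{GK2010} together with Proposition~\ref{p:uniquesection} for the section property and its uniqueness, and Proposition~\ref{P:boundarybranch} to exclude boundary branch points.

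First I would verify that $\Omega_n=\pi^{-1}(\Delta_n)$ is mean convex in the sense of Meeks--Yau. Its topological boundary consists of three vertical strips $\pi^{-1}(e)$, one over each edge $e$ of the geodesic triangle $\Delta_n\subset\R^2$; each such strip is a vertical totally geodesic plane in $\Nil_3$ (ruled by fibres over a base geodesic), hence minimal, and the three strips meet transversally along the vertical fibres over the vertices of $\Delta_n$. Morrey then yields a Plateau solution $M_n$ spanning $\Gamma_n$, and Meeks--Yau places it inside $\overline{\Omega_n}$ and makes it embedded.

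Next I would show that $M_n$ is a section over $\Delta_n$. The boundary projection $\pi\colon\Gamma_n\to\partial\Delta_n$ is injective except on the three vertical edges of $\Gamma_n$, which collapse to the three vertices of $\Delta_n$. This is precisely the setting covered by the generalisation mentioned in the Remark after Proposition~\ref{p:uniquesection}: the Gro\ss{}e-Brauckmann--Kusner flow argument still applies provided its maximum-principle step is upgraded to Nitsche's version, which accommodates finitely many vertical boundary segments. Uniqueness of $M_n$ is then immediate from Proposition~\ref{p:uniquesection}.

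To rule out boundary branch points I would apply Proposition~\ref{P:boundarybranch}. Each horizontal edge of $\Gamma_n$ lies in a single vertical plane $\pi^{-1}(e)$, and each vertical edge lies in the two vertical planes over the adjacent base edges, so every edge admits the required vertical-plane barrier. All six vertices of $\Gamma_n$ have interior angle $\pi/2=\pi/n_v$ with $n_v=2$, and the $\pi$-rotation about either incident edge at a vertex sends the relevant vertical plane to another vertical plane still disjoint from the interior of $M_n$, so the union $\Gamma_v$ inherits a barrier. Hence $M_n$ extends smoothly by Schwarz reflection without branch points. The main obstacle, I expect, is the section step: the statement of the~\cite{GK2010} criterion quoted in the excerpt assumes strictly injective boundary projection, whereas here three boundary edges collapse to points, so one must carefully justify the Nitsche-style extension --- in particular at the corners where a vertical edge of $\Gamma_n$ abuts a vertex of $\Delta_n$.
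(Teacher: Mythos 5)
Your overall strategy matches the paper's proof almost step for step, but there is one concrete gap that would make the Meeks--Yau step fail as you have set it up. You take $\Omega_n=\pi^{-1}(\Delta_n)$, which is \emph{not} compact: it is an infinite vertical cylinder over the triangle. The version of Meeks--Yau invoked in the paper requires a \emph{compact} manifold with mean convex boundary, so you cannot place the Plateau solution inside $\overline{\pi^{-1}(\Delta_n)}$ or conclude embeddedness directly. The paper's proof resolves this by intersecting $\pi^{-1}(\Delta_n)$ with the two horizontal half-spaces bounded by the horizontal umbrellas at $p_1$ and $p_4$ (the exponential images of $(\T_{p_1}\Nil_3)^h$ and $(\T_{p_4}\Nil_3)^h$). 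These umbrellas are minimal, meet the vertical walls transversally, and by the construction of $\Gamma_n$ one still has $\Gamma_n\subset\partial\Omega_n$ for the resulting compact slab $\Omega_n$. Without this truncation step your appeal to Meeks--Yau is unjustified; with it, your argument goes through.

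Two smaller remarks. First, vertical planes $\pi^{-1}(e)$ in $\Nil_3$ are minimal (ruled by fibres over a base geodesic), but they are \emph{not} totally geodesic --- $\Nil_3$ has no totally geodesic surfaces --- so only the minimality is available and is all that is needed for mean convexity. Second, your caveat about the section step is well taken: the \cite{GK2010} criterion as quoted assumes strictly injective boundary projection, whereas here the three vertical edges of $\Gamma_n$ collapse to the vertices of $\Delta_n$; the paper's own proof leans implicitly on the Remark after Proposition~\ref{p:uniquesection} (the Nitsche-type extension allowing finitely many vertical boundary segments), and you have flagged that dependence more explicitly than the paper itself does. Your barrier check for Proposition~\ref{P:boundarybranch} is fine.
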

\begin{proof}
Since $\Delta_n$ is convex and $\partial\Delta_n$ consists of geodesics, its preimage $\pi^{-1}(\Delta_n)$ is a mean convex set since the preimage of each geodesic of $\partial\Delta_n$ is minimal. The intersection $\Omega_n$ of $\pi^{-1}(\Delta_n)$ with two horizontal halfspaces defined by the horizontal umbrellas in $p_1$ and $p_4$ (i.e. the exponential map of the horizontal spaces $(\T_{p_1}\Nil_3)^h$ and $(\T_{p_4}\Nil_3)^h$ respectively) as its boundaries is compact. Moreover, by construction we have $\Gamma_n\subset\partial\Omega_n$. Therefore, the solution of the Plateau problem exists and is embedded by~\cite{MY1982}. Moreover, by Section~\ref{S:Plateau} the solution $M_n$ is a unique section of the trivial line bundle $\pi\colon\Omega_n\to\Delta_n$. Proposition~\ref{P:boundarybranch} implies that it extends as a smooth immersion across $\Gamma_n$ by Schwarz reflection since at each vertex the angle is $\pi/2$.
\end{proof}

Actually we are interested in an infinite Plateau solution, we construct it as a limit. Define $\Delta\coloneqq\bigcup\limits_{n \in \R}\Delta_n$.
\begin{theorem}\label{t:plateauinfinite}
There exists a unique minimal surface $M(a,b,\phi) \subset \Nil_3(\R)$ with $\partial M=\Gamma$, which is a section on $\Delta$ and extends without branch points by Schwarz reflection about its edges for all $a,b>0$ and $\phi\in(0,\pi)$.
\end{theorem}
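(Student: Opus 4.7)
The plan is to obtain $M$ as a subsequential limit of the compact Plateau solutions $M_n$ from Lemma~\ref{l:plateaufinite} and then check that the limit has all the asserted properties. Let me outline the steps in order.

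\textbf{Step 1: Barriers and a priori control.} Each $M_n$ is a section over $\Delta_n$, so the interior curvature estimates for sections in a Riemannian fibration (as cited before Proposition~\ref{p:uniquesection} / via the curvature estimate of Gro\ss{}e--Brauckmann--Kusner used in Section~\ref{S:knoid}) apply uniformly. To prevent $M_n$ from escaping to infinity or collapsing onto a vertical plane in the limit, I would set up explicit barriers: on one hand, the horizontal umbrellas and the vertical cylinders $\pi^{-1}(e)$ over the edges $e$ of $\Delta_n$ (which are minimal, by the argument used in the proof of Lemma~\ref{l:plateaufinite}); on the other hand, a suitable horizontal helicoid $H_\alpha$ from Section~\ref{S:conormalhoriheli}, with $\alpha$ chosen via Lemma~\ref{l:a0alphainf} so that its width is adapted to the strip containing $\Delta$ and its vertical rulings can be positioned at $p_1p_2$ and $p_3p_4$. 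These helicoids remain disjoint from $\Gamma_n$ outside a controlled region and provide height bounds for $M_n$ above any given compact subset of $\Delta$.

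\textbf{Step 2: Subsequential limit and identification as a section.} Using the uniform curvature estimate together with the barrier bounds from Step~1, I extract a subsequence $M_{n_j}$ converging smoothly on compact subsets of $\pi^{-1}(\Delta)$ to a minimal surface $M$. Because each $M_{n_j}$ is transverse to the fibres and the barriers preclude the limit normal from becoming horizontal (this is the point of the helicoid barrier, which enforces $\langle\nu,\xi\rangle>0$ away from the vertical boundary edges), the limit $M$ is a graph, i.e.\ a section over $\Delta$. Uniqueness then follows immediately from Proposition~\ref{p:uniquesection}, applied with the boundary data imposed below. In particular this makes the limit independent of the subsequence, so the whole sequence $M_n$ converges to $M$.

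\textbf{Step 3: Attaining the boundary $\Gamma$.} This is the main obstacle, since a priori pieces of $\Gamma$ might be lost in the limit. For each edge of $\Gamma$, I would slide a barrier (a vertical totally geodesic wall over a supporting line of $\Delta$, or a horizontal umbrella at the level of $p_1,p_4$) toward the edge and use the maximum principle to conclude that $M$ remains on the correct side and is continuous up to each edge of $\Gamma$. The vertical edge of infinite length emanating from $p_6$ (recall $d(p_6,p_5)\to\infty$) requires the helicoid barrier from Step~1 to force $\partial M$ to include the full ruling rather than truncating at finite height. The horizontal edges at $p_1p_2$ and $p_3p_4$ are handled symmetrically by horizontal umbrellas. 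Once this is done, the limit inherits $\partial M=\Gamma$.

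\textbf{Step 4: Smoothness and absence of branch points.} All six (now effectively four, after taking the limit) vertices of $\Gamma$ meet at angle $\pi/2$, and for each edge one of the barriers from Step~1 serves as the required vertical plane or horizontal umbrella with $S'\cap\overline M\subset\Gamma$. Hence Proposition~\ref{P:boundarybranch} applies and $M$ extends smoothly without branch points by Schwarz reflection across every edge of $\Gamma$. Combining Steps 2--4 gives existence; uniqueness is Proposition~\ref{p:uniquesection}. The delicate point throughout is Step~3: controlling what happens along the \emph{infinite} vertical edge at $p_6$, for which the horizontal helicoid of Section~\ref{S:conormalhoriheli} is the essential new tool beyond what was used for the finite $M_n$.
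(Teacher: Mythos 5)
Your proposal is correct and follows essentially the same route as the paper: extract a limit of the compact Plateau solutions $M_n$, using the Daniel--Hauswirth horizontal helicoid of Section~\ref{S:conormalhoriheli} as a barrier from above to obtain uniform height bounds over compact subsets of $\Delta$, then conclude via Propositions~\ref{P:boundarybranch} and~\ref{p:uniquesection}. The one simplification the paper makes that you do not exploit is that, by the maximum principle applied to the nested domains $\Delta_k\subset\Delta_n$, the sequence of section heights $u_n$ is monotone increasing; combined with the helicoid bound and the gradient estimate of~\cite{RST2010} this gives convergence of the full sequence directly, rather than requiring a subsequential argument followed by an appeal to uniqueness to rule out subsequence dependence.
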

\begin{proof}
Consider the sequence of minimal sections $M_n=(x,u_n(x))$ defined on $\Delta_n$ by Lemma \ref{l:plateaufinite}. By the maximum principle $u_n$ is a monotone increasing sequence on $\Delta_k$, $n\geq k$. We claim, the sequence is uniformly bounded on each compact subset $K\subset\Delta$. Since by~\cite{RST2010} we obtain a gradient estimate in any $x\in\Delta'\subset K$ that depends on the distance of $x$ to the boundary and the upper bound, this implies the compactness.

To prove the claim we consider $k\in\N$ such that $K\subset\Delta_k$ and a horizontal helicoid $H_k\coloneqq H_\alpha$, where $\alpha$ depends on the parameters $(a,b,\phi)$ of the sequence $M_n$. We orient $H_k$ such that one of its vertical rulings coincides with the vertical geodesic of $\Gamma_n$ of length $n^2$. Moreover if we consider the Plateau solution $M_k$ on $\Delta_k$ and start with the helicoid in height $k^2$ such that $\pi(H_k)$ is bounded by $\pi(\overline{p_4p_5})$ from one side, then we can move the helicoid downwards up to a height $h_k$, since $M_k$ is a section and $H_k$ is still a barrier from above. Now consider the sequence of minimal sections $M_n$, by the maximum principle there is no point of contact. Hence, $M_n$ is uniformly bounded on each compact domain $K$.

By diagonalization we obtain a subsequence, call it $u_n$ again, converging to some minimal section $u$ on $\Delta$, the convergence is uniform on every compact subset of $\Delta$. The surface $M=(x,u(x))$ is a minimal surface of disc-type continuous up to the boundary $\Gamma$. By Proposition~\ref{P:boundarybranch} $M$ is an immersion that extends without branch points by Schwarz reflection. The surface is unique by Proposition~\ref{p:uniquesection}.
\end{proof}

For $b=0$ the proof still holds, which proves the existence of the minimal surface described in Section~\ref{S:knoid}.

\begin{remark}\label{R:monotone}
For $(a,\phi)$ fixed, the maximum principle implies that the sequence $(M(a,b,\phi))_b$ is monotone in $b$, in the sense that $M_1\coloneqq M(a,b_1,\phi)$ is barrier from above for $M_2\coloneqq M(a,b_2,\phi)$ for $b_1<b_2$. To see this we orient $M_1$ and $M_2$ such that their infinite vertical geodesics in the boundaries coincide in the end, their interior projects to disjoint domains in the base and their infinite horizontal geodesics in the boundaries are in heights $\epsilon>0$ and $-b_2$ resp. In the projection the two finite horizontal geodesics form an angle $\alpha=\pi-\phi>0$. If we increase this angle, i.e. we rotate one surface towards the other, it is clear that there is no inner point of contact. Furthermore the surfaces cannot intersect for any angle $\tilde{\alpha}>0$, since this would lead to two graphs with the same boundary. If we rotate the surface further such that they are graphs above the same domain, there is still no intersection since the normals rotate monotone along the vertical geodesic. The same argument shows that we can translate $M_1$ down without an intersection, which proves the claim.
\end{remark}

\subsection{Period problems}

To construct an mc $1/2$ surface with genus $1$ and certain symmetries we have to solve two period problems. One period is given by the vertical distance of the two horizontal mirror curves that are given by the two horizontal mirror curves. The second period is angular and determined by the geometry of the finite horizontal curve $\tilde{c}_2$.

To solve the first period problem for the mc $1/2$ surface $\tilde{M}\subset \H^2\times\R$, i.e. to construct a minimal surface $M\subset \Nil_3(\R)$ such that the two horizontal components of its sister surface lie in the same horizontal plane, we consider the mirror curve in the vertical plane with finite length $a$ in $\partial\tilde{M}$, and call it $\tilde{c}_1$. The period is given by $p=\int \la\tilde{c}'_1,\tilde{\xi}\ra_{\H^2\times\R}$, where $\tilde{\xi}$ is the vertical vector field of $\H^2\times\R$. Since we have a first order description for the vertical parts of vector fields, we consider the vector field $\tilde{T}$ on $\tilde{M}$ given by $\tilde{\xi}-\la\tilde{\xi},\tilde{\nu}\ra\tilde{\nu}$, where $\tilde{\nu}$ denotes the normal of $\tilde{M}$. It is the tangential projection of the vertical vector field and rotates by $\pi/2$ in the tangent plane under conjugation:
\[
\diff f^{-1}(T)=J\diff\tilde{f}^{-1}(\tilde{T}),
\]
where $f$ and $\tilde{f}$ denote the corresponding parametrizations of $M$ and $\tilde{M}$. Therefore, we have an analogous formulation of the period on $M$:
\begin{align*}
p(\tilde{M})&=\int\limits_{\tilde{c}_1} \la\tilde{c}'_1,\tilde{\xi}\ra_{\H^2\times\R}=\int \la\diff\tilde{f}(\gamma'),\tilde{T}\ra_{\H^2\times\R}\\
&=\int \la\diff\tilde{f}(\gamma'),\diff\tilde{f}(J^{-1}\diff f^{-1}(T))\ra_{\H^2\times\R}=\int \la J\gamma',\diff f^{-1}(T)\ra\\
&=\int\limits_{c_1}\la\eta,\xi\ra_{\Nil_3(\R)}=p(M).
\end{align*}

As seen above, we have a Plateau solution $M(a,b,\phi)$ for all $a,b>0$ and $\phi\in(0,\pi)$. The first period problem is solvable for each $a>0$ and $\phi\in\left(0,\pi/2\right)$:
\begin{proposition}\label{l:period1}
For each $a>0$ and $\phi\in\left(0,\pi/2\right)$ there exists $b(a,\phi)>0$ such that $p(M(a,b(a,\phi),\phi))=0$.
\end{proposition}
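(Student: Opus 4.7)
I would show that $b\mapsto p(b):=p(M(a,b,\phi))$ is continuous on $(0,\infty)$, exhibit values $b_-$ and $b_+$ with $p(b_-)\cdot p(b_+)<0$, and then apply the intermediate value theorem.

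For continuity, standard smooth dependence of Plateau solutions on boundary data applies: as $b_n\to b_0\in(0,\infty)$, the boundary contours $\Gamma(a,b_n,\phi)$ converge smoothly on compact subsets, the uniform curvature and area estimates underlying Theorem~\ref{t:plateauinfinite} are uniform in $n$, and the uniqueness statement of Proposition~\ref{p:uniquesection} rules out multiple limits, so $M(a,b_n,\phi)\to M(a,b_0,\phi)$ smoothly on compact subsets of $\overline{\Delta}$. Proposition~\ref{P:boundarybranch} gives smoothness across the compact horizontal edge $c_1$, so the conormal $\eta$ along $c_1$ depends continuously on $b$, and hence so does $p(b)=\int_{c_1}\langle\eta,\xi\rangle_{\Nil_3(\R)}$.

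For the sign change I would contrast small $b$ with large $b$. As $b\to 0^+$ the surface approaches the non-degenerate $k$-noid-like surface $M(a,0,\phi)$ of Section~\ref{S:knoid}, whose sister has a horizontal mirror plane containing $\tilde h_2$; the collapsed vertex $\tilde p_1=\tilde p_2$ then has a definite vertical offset from this plane determined by $\phi$, and for $\phi\in(0,\pi/2)$ this offset has a fixed sign, so $p(0^+)$ does as well. As $b\to\infty$, I would use the horizontal helicoid $H_\alpha$ of Section~\ref{S:conormalhoriheli} as an external barrier, choosing $\alpha$ by Lemma~\ref{l:a0alphainf} so that $H_\alpha$ has width $a$ and aligning one of its vertical rulings with the long edge $\overline{p_3 p_4}$. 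The conormal tilt identity $\cos\phi=\tanh(-\alpha v)$ from Section~\ref{S:conormalhoriheli}, combined with the monotonicity in $b$ of Remark~\ref{R:monotone} and the control on the horocycle angle via Proposition~\ref{p:anglecompare}, forces the conormal of $M(a,b,\phi)$ along $c_1$ to tilt past the vertical once $b$ is large enough, yielding $p(b)$ of the opposite sign. The intermediate value theorem then supplies $b(a,\phi)\in(0,\infty)$ with $p(b(a,\phi))=0$.

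The main obstacle is establishing the two signs. The small-$b$ analysis requires tracking the vertical position of the collapsed vertex $\tilde p_1=\tilde p_2$ relative to $\tilde h_2$, and this is where the angle restriction $\phi<\pi/2$ actually enters. The large-$b$ analysis requires the helicoid to be a genuine barrier on the relevant mean-convex domain, so that its conormal data transfers into a sign statement on $p$; the horocycle foliation estimate of Proposition~\ref{p:anglecompare} on the $\H^2\times\R$ side is the natural tool to quantify this. Continuity of $p$ is essentially automatic and is not the source of difficulty.
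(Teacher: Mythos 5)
Your overall strategy is the same as the paper's: show $b\mapsto p(b)$ is continuous, exhibit a sign change, and invoke the intermediate value theorem. The continuity step and the choice of tools --- the $k$-noid limit for $b\to 0$ and the Daniel--Hauswirth helicoid for large $b$ --- also match the paper. However, both sign arguments as you describe them have gaps.

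For small $b$: the paper's reason that the period is positive is concrete --- the limiting $k$-noid $N$ in $\Nil_3$ is bounded from below by a horizontal umbrella, which forces $\langle\eta,\xi\rangle>0$ along $c_1$ and hence $p_N>0$; the conormals $\eta_b$ then converge uniformly on compact subsets of $c_1$, giving $p(b)>0$ for $b$ small. Your phrasing about a ``definite vertical offset of the collapsed vertex determined by $\phi$'' names the quantity but gives no reason for its sign; without the barrier-from-below observation the small-$b$ sign is unjustified.

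For large $b$: the paper's argument is a pointwise conormal comparison entirely in $\Nil_3$. The helicoid is placed with its axis along $c_1$ (not merely with a ruling on $\overline{p_3p_4}$ --- the axis alignment is what makes the comparison along $c_1$ possible). Using the tilt identity to locate the height $h$ where the helicoid conormal is horizontal, one finds $b_t\geq\abs{h}$ so that for $b>b_t$ the helicoid is a barrier lying above $M_b$. This gives $\langle\eta,\xi\rangle<\langle\eta_H,\xi\rangle$ along $c_1$, and then, crucially, $\int_{c_1}\langle\eta_H,\xi\rangle=0$ by the odd symmetry of the helicoid about its axis, yielding $p(M_b)<0$. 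Your proposal omits both the barrier-to-inequality step and the vanishing of the helicoid's own period, and instead invokes Proposition~\ref{p:anglecompare} and Remark~\ref{R:monotone}. But Proposition~\ref{p:anglecompare} concerns the horocycle angle of the \emph{horizontal} sister curve $\tilde c_2$ in $\H^2\times\R$ and belongs to the second period problem; it has no bearing on the integral of $\langle\eta,\xi\rangle$ along the horizontal geodesic $c_1$ in $\Nil_3$. The phrase ``tilt past the vertical'' is also not what happens --- the conormal is vertical at both endpoints of $c_1$, and the sign comes from the barrier comparison, not from any threshold crossing. So the large-$b$ argument as written would not go through; you need the explicit barrier inequality plus the symmetry of the helicoid's conormal integral.

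Also note that the restriction $\phi<\pi/2$ enters at the large-$b$ end, not the small-$b$ end: it is what ensures $\pi(M_H)\cap\pi(M_b)$ is a bounded convex domain over which both surfaces are sections, making the barrier argument available.
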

\begin{proof}
Let $a>0$ and $\phi\in\left(0,\pi/2\right)$ be fixed. By Theorem~\ref{t:plateauinfinite} we have a unique Plateau solution $M_b\coloneqq M(a,b,\phi)$ for each $b>0$. We claim the function $p(b)\coloneqq p(M_b)$ is continuous in $b$. To see this, we take two converging sequences $(b_l)$ and $(b_k)$ with the same limit $b_0$. If $\lim p(b_l)\ne \lim p(b_k)$ this would contradict the uniqueness of the minimal section, since the corresponding minimal surfaces also converge by the boundedness of the sequences $M_{l}\coloneqq M(a,b_l,\phi)$.

We will now show that there exists $b_t\in\R$ such that $p(b)<0$, for all $b>b_t$, and $\lim_{b\to 0}p(b)>0$. By the intermediate value theorem this proves the lemma.
\begin{itemize}
\item To define $b_t$, we consider the horizontal helicoid $M_H$ constructed by Daniel and Hauswirth~\cite{DH2009} from Section~\ref{S:conormalhoriheli}, whose horizontal axis coincides with $c_1$. By Lemma~\ref{l:a0alphainf} there exists a helicoid $M_H$ for each pitch $a>0$ such that the incident vertical arcs of $\Gamma$ are contained in its vertical rulings. Since $\phi<\pi/2$ the helicoid $M_H$ and $M_b$ are both minimal sections over a bounded convex domain $\pi(M_H)\cap\pi(M_b)$, whose boundary consists of three geodesic arcs. 

We claim there exists $b_t>0$, such that the surfaces intersect in $\Gamma$ only. In Section~\ref{S:conormalhoriheli} we showed that the conormal $\eta_H$ of the helicoid along the vertical ruling is horizontal and its opening angle depends continuously on the height given by 
\[h=\frac{\sinh\left(\frac{-1}{2}\ln\left(\frac{1+\cos\phi}{1-\cos\phi}\right)\right)}{2\alpha^2},\] where $\alpha$ depends on the pitch $a$, see Lemma~\ref{l:a0alphainf}. We consider the vertical plane $V$, given by its tangent plane, that is spanned by the conormal $\eta_H$ and $\xi$ in height $h$. Since each conormal is horizontal and turns monotonically, the intersection $V\cap M_H$ meets the vertical ruling in exactly one point given by the height $h$. Moreover $M_H$ is a section in the interior and therefore $V\cap M_H$ is bounded from below. Hence for $a>0$ and $\phi\in(0,\pi/2)$ exists $b_t\geq \abs{h}$ such that the helicoid is a barrier lying above $M_b$ for every $b> b_t$.

Consequently we can estimate the vertical component of the conormal $\eta$ of the Plateau solution $M_b$ by the helicoid conormal $\eta_H$ along the interior of the curve $c_1$: \[\la \eta,\xi\ra<\la\eta_H,\xi\ra\,\quad \mbox{ and hence }\quad p(M_b)=\int\la \eta,\xi\ra<\int\la\eta_H,\xi\ra=0.\]
\item Otherwise for $b\to 0$ we consider a minimal $k$-noid $N$ in $\Nil_3(\R)$ as in Section~\ref{S:knoid} with the parameters $a$ and $\phi$. It has a positive period $p_N$, since it is bounded from below by a horizontal umbrella.

For $b\to 0$ we have $M_b\to N$ away from the singularity. Furthermore, the sequence of conormals $\eta_b$ converges uniformly to $\eta_N$ on compact sets $K\subset c_1$. Therefore, the period $p(b)\vert_K$ converges uniformly to $p_N\vert_K >0$ for each compact $K\subset c_1$ and $b\to 0$. Hence, on $c$ we have $p(b)> 0$ for $b\to 0$.
\end{itemize}
\end{proof}

\begin{remark}
For $\phi=\pi/2$ the proof does not work since the helicoid $M_H$ is not a barrier for $b<\infty$. Therefore, we cannot construct a genus $1$ catenoid in $\H^2\times\R$ with this method.
\end{remark}

\begin{lemma}\label{l:bcont}
For each $a>0$ and $\phi\in\left(0,\pi/2\right)$ the map $b\colon \R_+\times\left(0,\pi/2\right)\to\R_+$ defined by Proposition~\ref{l:period1} is a continuous function.
\end{lemma}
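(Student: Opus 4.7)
The plan is to combine joint continuity of the period $p(a,b,\phi)\coloneqq p(M(a,b,\phi))$ in all three parameters with uniqueness of the zero $b(a,\phi)$ (coming from the monotonicity in Remark~\ref{R:monotone}), and then run the standard subsequence argument. Fix $(a_0,\phi_0)\in\R_+\times(0,\pi/2)$ and a sequence $(a_n,\phi_n)\to(a_0,\phi_0)$; set $b_n\coloneqq b(a_n,\phi_n)$. The goal is $b_n\to b(a_0,\phi_0)$.

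First, I would establish that $(b_n)$ stays in a compact subinterval of $\R_+$, by making the two limit arguments in the proof of Proposition~\ref{l:period1} locally uniform in $(a,\phi)$. For the upper bound, Lemma~\ref{l:a0alphainf} shows that the helicoid parameter $\alpha=\alpha(a)$ depends continuously on $a$, and the explicit formula for the height $h=h(a,\phi)$ of the conormal shows it depends continuously on $(a,\phi)$; hence on a compact neighbourhood $U$ of $(a_0,\phi_0)$ there is a uniform $b_t$ such that $p(a,b,\phi)<0$ whenever $b>b_t$. For the lower bound, the $k$-noid $N(a,\phi)$ of Section~\ref{S:knoid} and its positive period $p_N(a,\phi)$ depend continuously on $(a,\phi)$, so on $U$ there is a uniform $b_0>0$ with $p(a,b,\phi)>0$ for $b<b_0$. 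Consequently $b_n\in[b_0,b_t]$ for $n$ large.

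Next, I would show joint continuity of $p$ in $(a,b,\phi)$. Given any convergent sequence of parameters, the associated Plateau solutions satisfy uniform curvature estimates and are uniformly bounded on compact subsets of the limiting projection domain (by the helicoid barrier from above and the $k$-noid barrier from below, plus the gradient estimate of~\cite{RST2010} that was already invoked in Theorem~\ref{t:plateauinfinite}). Standard compactness for minimal graphs extracts a subsequential limit that is a minimal section with boundary $\Gamma(a_0,b_0,\phi_0)$; by the uniqueness in Proposition~\ref{p:uniquesection} this limit is $M(a_0,b_0,\phi_0)$ and the full sequence converges smoothly on compact subsets. In particular the conormal $\eta$ along the finite horizontal edge $c_1$ converges uniformly, so $p(a_n,b_n,\phi_n)\to p(a_0,b_0,\phi_0)$.

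Finally, I would use Remark~\ref{R:monotone}: for fixed $(a,\phi)$ the family $(M(a,b,\phi))_b$ is strictly monotone in $b$, which upgrades to strict monotonicity of $b\mapsto p(a,b,\phi)$ (the conormals of two strictly ordered graphical sections with coinciding vertical boundary edge differ strictly in their $\xi$-component along the interior of $c_1$ by the Hopf lemma). Hence $b(a,\phi)$ is the unique zero of $p(a,\cdot,\phi)$. Combining with the previous two steps: any convergent subsequence $b_{n_k}\to b_*\in[b_0,b_t]$ satisfies $p(a_0,b_*,\phi_0)=\lim p(a_{n_k},b_{n_k},\phi_{n_k})=0$, so $b_*=b(a_0,\phi_0)$ by uniqueness, and therefore $b_n\to b(a_0,\phi_0)$.

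The main obstacle is the joint continuity step: turning the already-available per-parameter compactness into a clean convergence statement requires the helicoid barrier and the $k$-noid barrier to be upgraded to locally uniform barriers in $(a,\phi)$, so that the minimal sections of the perturbed problems stay uniformly bounded on every compact subset of the limit domain. Once this is in place, uniqueness via Proposition~\ref{p:uniquesection} and Remark~\ref{R:monotone} make the rest of the argument routine.
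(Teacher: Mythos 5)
Your proof follows essentially the same route as the paper's: compactness of the Plateau solutions gives subsequential convergence, and the strict monotonicity in $b$ (Remark~\ref{R:monotone}, realized as an upper-barrier comparison of the conormals along $c_1$) forces the zero of $p(a,\cdot,\phi)$ to be unique, which kills any two distinct subsequential limits. You are somewhat more careful than the paper in making the uniform bounds on $(b_n)$ explicit — the paper tacitly assumes the two $b$-limits exist and lie in $\R_+$, whereas you derive this from locally uniform helicoid and $k$-noid barriers — but the core mechanism is identical.
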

\begin{proof}
We assume that $b$ is discontinuous in $(a,\phi)$, i.e. there exist two sequences $(a_l,\phi_l)$ and $(\overline{a}_l,\overline{\phi}_l)$ with limit $(a_0,\phi_0)$ but w.l.o.g. $b_0=\lim b(a_l,\phi_l)>\lim b(\overline{a}_l,\overline{\phi}_l)=\overline{b}_0$. In the proof of~\ref{l:period1} we have seen, that the sequences of the corresponding minimal surfaces contain converging subsequences with limits $M=M(a_0,b_0,\phi_0)$ and $\overline{M}=M(a_0,\overline{b}_0,\phi_0)$ respectively. Both minimal surfaces $M$ and $\overline{M}$ have zero period. But for $b_0>\overline{b}_0$ the minimal surface $M$ bounds a mean convex domain from above with $\partial \overline{M}$ in its boundary. Therefore, $M$ is an upper barrier for $\overline{M}$ with $\la\eta,\xi\ra>\la\overline{\eta},\xi\ra$, contradicting the fact that both minimal surfaces have zero period.\end{proof}

We solved the first period problem in $b$ depending on $(a,\phi)$. Namely, for each angle $\phi\in(0,\pi/2)$ and horizontal geodesic of length $a$, there exists $b>0$ as length of the vertical geodesic, such that the two horizontal mirror curves in the sister surface lie in the same mirror plane.

To solve the second period problem we need to restate the solution of the first period problem: For an angle $\phi$ and a vertical geodesic $c_2$ of length $b$ there exists a horizontal geodesic of length $a$, such that the period is zero:

\begin{proposition}\label{p:convergelimit}
For each $b>0$ and $\phi\in\left(0,\pi/2\right)$ there exists $a(b,\phi)>0$, such that the first period of the minimal surface $M(a(b,\phi),b,\phi)$ is zero.
\end{proposition}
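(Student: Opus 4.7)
The plan is to mirror the proof of Proposition~\ref{l:period1} with the roles of $a$ and $b$ swapped. Fix $b>0$ and $\phi\in(0,\pi/2)$, set $q(a):=p(M(a,b,\phi))$, and first verify continuity of $q$ in $a$. This follows the same scheme as in the proof of Lemma~\ref{l:bcont}: for $a_n\to a_0$, the sections $M(a_n,b,\phi)$ are locally uniformly bounded on the limit domain by the helicoid comparison and curvature estimates used in Theorem~\ref{t:plateauinfinite}, a subsequence converges to a minimal section, and the uniqueness in Proposition~\ref{p:uniquesection} forces the limit to be $M(a_0,b,\phi)$, so the conormals along $c_1$ and hence the periods converge.

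Next I would exhibit some $a_0$ with $q(a_0)<0$ by invoking the helicoid barrier from Proposition~\ref{l:period1}. By Lemma~\ref{l:a0alphainf} the pitch parameter satisfies $\alpha(a)\to\infty$ as $a\to 0$, so the critical height
\[
|h(a,\phi)| = \frac{\sinh\bigl(\tfrac12\ln\tfrac{1+\cos\phi}{1-\cos\phi}\bigr)}{2\alpha(a)^2}
\]
tends to $0$, and the threshold $b_t(a,\phi)\geq|h(a,\phi)|$ produced in that proof tends to $0$ as well. Once $a$ is small enough to ensure $b>b_t(a,\phi)$, the Daniel--Hauswirth helicoid lies above $M(a,b,\phi)$ on their common convex projection, and the Hopf-type conormal comparison along $c_1$ gives $q(a)<0$.

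The main difficulty is to produce some $a_1$ with $q(a_1)>0$. My plan is to exploit the function $b(a,\phi)$ provided by Proposition~\ref{l:period1} together with the monotonicity in Remark~\ref{R:monotone}: the latter yields that $p(M(a,\cdot,\phi))$ is strictly decreasing in its second argument, so $q(a_1)>0$ as soon as $b<b(a_1,\phi)$, and it suffices to show that $\sup_{a}b(a,\phi)=\infty$. The natural route, which I expect to be the hard step, is a comparison of $M(a,b(a,\phi),\phi)$ with the minimal $k$-noid $N(a,\phi)$ of Section~\ref{S:knoid}: by letting $b'\to 0$ in Remark~\ref{R:monotone}, $N(a,\phi)$ sits above every $M(a,b',\phi)$, so a quantitative lower bound on its positive period $p_{N(a,\phi)}$ as $a$ grows --- expected from a scaling analysis of the helicoid-type behaviour underlying Lemma~\ref{l:a0alphainf} --- forces $b(a,\phi)\to\infty$. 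Granted $q(a_0)<0$ and $q(a_1)>0$, the intermediate value theorem applied to the continuous function $q$ produces $a(b,\phi)\in(a_0,a_1)$ with $q(a(b,\phi))=0$, as required.
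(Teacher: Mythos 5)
Your overall architecture matches the paper's: continuity of the period in the parameter, the Daniel--Hauswirth helicoid as a barrier controlling one end of the range, the monotonicity from Remark~\ref{R:monotone}, and the intermediate value theorem. The paper packages the argument through properties of the function $b(a,\phi)$ from Proposition~\ref{l:period1} (continuity from Lemma~\ref{l:bcont}, $b(a,\phi)\to 0$ as $a\to 0$, unboundedness of $a\mapsto b(a,\phi)$) and then applies IVT, whereas you work directly with $q(a)=p(M(a,b,\phi))$; these are logically equivalent reformulations, and your treatment of the small-$a$ end (threshold $b_t(a,\phi)\to 0$ because $\alpha(a)\to\infty$, hence $q(a)<0$) is exactly the mechanism the paper uses.

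The genuine gap is in the step you correctly flag as hard: producing $a_1$ with $q(a_1)>0$, equivalently $\sup_a b(a,\phi)=\infty$. Your proposed route via the $k$-noid $N(a,\phi)$ runs in the wrong direction. Since $N(a,\phi)=M(a,0,\phi)$ lies \emph{above} $M(a,b,\phi)$ for $b>0$, the barrier comparison (as used in Lemma~\ref{l:bcont}) gives $\la\eta_N,\xi\ra>\la\eta,\xi\ra$ along $c_1$ and hence $p_N(a,\phi)>p(M(a,b,\phi))$: a lower bound on $p_N$ is an \emph{upper} bound's neighbour for what you want, and tells you nothing about the sign of $q(a)$. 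What you would need is a lower bound on $q(a)$ itself, or some rate estimate on how quickly $p(M(a,\cdot,\phi))$ decreases in its second argument, and the ``scaling analysis'' you gesture at is not in the paper (Lemma~\ref{l:a0alphainf} controls $\alpha$ as $a\to 0$, not the period for large $a$). For comparison, the paper handles this step by contradiction and simply asserts, without proof, that ``for $a$ large enough the minimal surface has a positive period''; so the paper's own justification is also thin, but it at least invokes a claim of the right form, whereas your proposed comparison cannot deliver it.
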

\begin{proof}
By Lemma~\ref{l:bcont} the map $b\colon \R_+\times\left(0,\pi/2\right)\to\R_+$ continuous. For each $\phi$ we claim $b(a,\phi)\to 0$ for $a\to 0$. Indeed, by the proof of Proposition~\ref{l:period1} there exists $-h\in\R$ as an upper bound of $b(a,\phi)$ given by $a$ and $\phi$: \[-h=\frac{\sinh\left(\frac{1}{2}\ln\left(\frac{1+\cos\phi}{1-\cos\phi}\right)\right)}{2\alpha(a)^2}\geq b(a,\phi),\quad\text{since otherwise}\quad p>0.\] By Lemma~\ref{l:a0alphainf} we know that $\alpha(a)\to\infty$ for $a\to 0$. Therefore, the height $h$ converges to zero as well as $b(a,\phi)$ converges to zero.

Remains to show that the map $b$ is unbounded. For each $\phi>0$ assume the contrary: The function $b_\phi(a)\coloneqq b(a,\phi)\leq \hat{b}$ is bounded. Since $b$ is a continuous function, we have $p(M_{\tilde{b}})\ne0$ for $\tilde{b}>\hat{b}$. By Remark~\ref{R:monotone} the minimal surface $M(a,b(a,\phi),\phi)$ is a barrier from above for $M(a,\tilde{b},\phi)$, therefore $p(M_{\tilde{b}})<0$. The continuity of $p$ in $b$ implies $p(M_{\tilde{b}})<0$ for $\tilde{b}>\hat{b}$ and for all $a>0$.

Consider the map $p_1\colon a\mapsto p(M(a,\tilde{b},\phi))$. We claim the function $p_1$ is continuous. To see this, we take two converging sequences $(a_l)$ and $(a_k)$ with the same limit $a_0$. Since the corresponding minimal surfaces also converge, this implies $\lim p(a_l)= \lim p(a_k)$. Moreover, for $a$ large enough the minimal surface has a positive period, therefore there exists $\hat{a}>0$ with $p_1(\hat{a})=0$, which is a contradiction.

Hence, $b_\phi(a)$ is unbounded with $b_\phi(a)\to0$ for $a\to0$. So we conclude: For all $b>0$ there exists $a(b,\phi)>0$ (not necessarily unique) such that $M(a(b,\phi),b,\phi)$ has zero period.
\end{proof}

\begin{remark}
For conjugate minimal surfaces in $\R^3$ we have a unique $a(b,\phi)$, since $a\mapsto b(a,\phi)$ is injective because of scaling.
\end{remark}

The second period problem relies on the horizontal mirror curve $\tilde{c}_2$ in the cmc sister $\tilde{M}$ and there are two difficulties, we have to solve: First of all, we have to ensure that the two vertical mirror planes, which are perpendicular to $\tilde{c}_2$ intersect. And secondly their angle of intersection has to be $\pi/k$. We have to choose the pair $(b,\phi)$ of the minimal surface such that the sister surface fulfils the desired properties. The second period problem is independent of $a$ since it is given by its curvature and length, therefore we do not need a degree argument like in~\cite{KPS1988}. Hence it makes sense to solve the second period problem by considering $\tilde{c}_2$ only.

Before we solve the second period problem, we want to analyse the finite horizontal mirror curve $\tilde{c}_2$ in $\H^2\times\R$ parametrized by arc length. We choose the downward-pointing surface normal $\nu$ and $(c'_2,\eta,\nu)$ positive oriented. Where $c_2$ is the sister curve, then $\la c'_2,\xi\ra=1$. We consider the twist $\phi(t)$ of $c_2$, recall the definition from Section~\ref{S:sistercurves}. The curvature of $\tilde{c}_2$ is $\tilde{k}=1-\phi'(t)<1$, since $\phi'>0$ by the graph property of the minimal surface. Proposition~\ref{p:anglecompare} implies the embeddedness, since $\theta\leq\phi<\pi/2$; moreover, $\theta\geq0$ since $\theta<0$ implies $\tilde{k}>1$. With $\gamma_0$ and $\gamma_b$ we denote the unique geodesics given by $\gamma_i(0)=\tilde{c}_2(i)$ and $\gamma'_i(0)=-\tilde{\nu}(i)$, $i=0,b$, see Figure~\ref{f:intersect}.
\begin{figure}
\begin{center}
\psfrag{c}{$\tilde{c}_2$}
\psfrag{0}{$\gamma_b$}
\psfrag{1}{$\gamma_0$}
\psfrag{a}{$\alpha$}
\includegraphics[width=4.5cm]{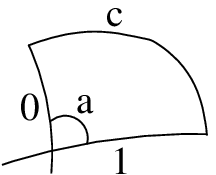}\end{center}
\caption{Sketch of the defined geodesics $\gamma_i$, $i=0,b$.}\label{f:intersect}
\end{figure}

As said before, in a first step we have to ensure, that the geodesics intersect:
\begin{proposition}\label{p:intersect}
For each $\phi\in\left(0,\pi/2\right)$ exists $b_0\coloneqq b_0(\phi)\in(0,\phi)$, such that the vertical mirror planes of $\tilde{M}(a(b,\phi),b,\phi)$ intersect for all $b\in(0,b_0(\phi))$ and define an intersection-angle $\alpha>0$.
\end{proposition}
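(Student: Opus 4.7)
The plan is to apply Gauss--Bonnet to the geodesic triangle in $\H^2$ cut off by $\tilde c_2$, $\gamma_0$ and $\gamma_b$, with Proposition~\ref{p:anglecompare} providing the crucial control on $\tilde c_2$. The first step is to identify the total normal twist of $c_2$ with the construction angle $\phi$: at the endpoints $p_1$ and $p_2$ of the vertical geodesic $c_2\subset\Nil_3$ the surface normal is forced to be perpendicular (in projection) to the incident horizontal edges $\overline{p_6p_1}$ and $\overline{p_2p_3}$, whose base-projections meet at angle $\phi$ at $\hat p_1$. Hence the twist function from Section~\ref{S:sistercurves} satisfies $\phi(b)=\phi$, and Lemma~\ref{l:torsionangle}(1) gives $\tilde k=1-\phi'(t)$, whence
\[
\int_{\tilde c_2}\tilde k\,ds\;=\;b-\phi.
\]

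Second, I would verify that the full geodesics $\gamma_0$ and $\gamma_b$ meet in $\H^2$ for $b$ sufficiently small. In the upper half-plane model adapted to the horocycle foliation $\mathcal F_{\tilde c_2}$, normalize so that $\tilde c_2(0)=(0,1)$ with $\tilde c_2'(0)$ tangent to the horocycle $\{y=1\}$; then $\gamma_0$ is the vertical geodesic $\{x=0\}$ and $\gamma_b$ is the Euclidean circle orthogonal to $\{y=0\}$ through $\tilde c_2(b)$, whose initial direction deviates from that of $\gamma_0$ by the angle $\theta(b)\leq\phi<\pi/2$ of Proposition~\ref{p:anglecompare}. A direct intersection computation shows that the two geodesics meet once $b$ lies below an explicit positive threshold tending to $\tan(\phi/2)$ as $b\to 0$, so they cross transversally for all sufficiently small $b$ at some angle $\alpha\in(0,\pi)$.

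Finally, applying Gauss--Bonnet to the geodesic triangle $T$ with corners $\tilde c_2(0)$, $\tilde c_2(b)$ and the intersection point: the interior angles at the first two corners are $\pi/2$, the inward normal of $T$ along $\tilde c_2$ points opposite to $\tilde\nu$, so the geodesic curvature of $\tilde c_2$ read from inside $T$ is $-\tilde k$; together with $K\equiv -1$ in $\H^2$ this yields
\[
-\vol(T)+(\phi-b)+(2\pi-\alpha)\;=\;2\pi,
\qquad\text{i.e.}\qquad
\alpha\;=\;\phi-b-\vol(T).
\]
Since the three vertices of $T$ collapse to $\tilde c_2(0)$ as $b\to 0$, $\vol(T)\to 0$ and $\alpha\to\phi>0$; shrinking $b_0(\phi)$ further if necessary gives $\alpha>0$ on $(0,b_0(\phi))$, and the identity $\alpha=\phi-b-\vol(T)>0$ automatically forces $b_0(\phi)\leq\phi$. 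The principal difficulty is the rigorous verification of the intersection for small $b$ in the absence of an explicit formula for $\tilde c_2$; this should follow from the uniform bound $\theta(b)\leq\phi<\pi/2$ together with continuity of the $\H^2$-exponential map in the limit $b\to 0$.
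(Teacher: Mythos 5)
Your overall architecture is sound, and the identification $\int_{\tilde c_2}\tilde k = b-\phi$ from the boundary geometry is correct (the paper uses this same fact in Proposition~\ref{P:2periods}). However, the heart of Proposition~\ref{p:intersect} is precisely the step you defer: showing that $\gamma_0$ and $\gamma_b$ actually meet in $\H^2$. You assert that ``a direct intersection computation shows'' they meet, and at the end you admit this is ``the principal difficulty'' and that it ``should follow from'' continuity of the exponential map --- but that is the entire content of the proposition, and it cannot be waved through. The paper supplies exactly the computation you are missing: it parametrizes $\gamma_b$ explicitly as a Euclidean semicircle in the upper half-plane, derives the intersection criterion $c_x + c_y\frac{1-\cos\theta}{\sin\theta}>0$, and then uses the estimates $c_x\ge -b$, $c_y>e^{-b}$ (from $d_{\H^2}(\tilde c_2(0),\tilde c_2(b))\le b$) together with $\theta(b)\ge\phi-b$ (from $\theta'=\phi'+\cos\theta-1$ and $\cos\theta\ge 0$) and the monotonicity of $\theta\mapsto(1-\cos\theta)/\sin\theta$ to reduce everything to the single explicit inequality $f_\phi(b)\coloneqq\frac{1-\cos(\phi-b)}{\sin(\phi-b)}-be^b>0$. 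Since $f_\phi$ is strictly decreasing on $(0,\phi)$ with $f_\phi(0^+)=\tan(\phi/2)>0$ and $f_\phi(\phi^-)<0$, this produces a unique $b_0(\phi)\in(0,\phi)$ with the stated property. Your phrase about a ``threshold tending to $\tan(\phi/2)$ as $b\to 0$'' conflates the limiting value of the right-hand bound with the threshold on $b$ itself; the actual threshold is the implicit zero $b_0(\phi)$ of $f_\phi$.

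A secondary issue: the Gauss--Bonnet computation $\alpha=\phi-b-\vol(T)$ is not needed to prove this proposition and cannot be used to establish intersection, since it presupposes that the triangle $T$ already exists. Once intersection is shown, $\alpha>0$ is immediate: two distinct complete geodesics in $\H^2$ that meet do so transversally, and $\gamma_0\ne\gamma_b$ because $\tilde c_2$ has nonzero total curvature $b-\phi$ for $b<\phi$. The Gauss--Bonnet identity is the content of Proposition~\ref{P:2periods}, where it is used to pin down the \emph{value} of $\alpha$; importing it here only obscures that the real work is the explicit intersection estimate.
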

\begin{proof}
As in Proposition~\ref{p:anglecompare} we consider the foliation by horocycles given by $\tilde{\nu}(0)$ and the related angle $\theta\leq\phi<\pi/2$. For the calculation we consider the upper halfplane and orient $\tilde{c}_2$ such that $\tilde{c}_2(0)=(0,1)$ and $\tilde{\nu}(0)=(0,1)$, then $\gamma_0$ is contained in the $y$-axis. We want to parametrize the unique geodesic $\gamma_b\subset\H^2$, which starts in the endpoint of $\tilde{c}_2$ ($\tilde{c}_2(b)=(c_x,c_y)$) and its tangent is parallel to $(\sin(\theta), -\cos(\theta))$. For $\theta\ne k\pi, k\in \Z$, $\gamma_b$ is an Euclidean halfcircle with radius $r$ and its midpoint on the $x$-axis. We solve the linear equations
\[
\gamma_b(\pi-\theta)=(c_x,c_y)=(x+r\cos(\pi-\theta),r\sin(\pi-\theta)).\] The geodesic is in Euclidean coordinates parametrized by \[\gamma_b(t)=\left(c_x+c_y \left(\frac{\cos(\pi-t) -\cos \theta}{\sin\theta}\right),c_y\frac{\sin(\pi- t)}{\sin\theta}\right).\]
The geodesics intersect if the $x$-coordinate of $\gamma_b(\pi)$ is positive: \begin{equation}\label{E:inequal}
c_x+c_y\frac{1-\cos\theta}{\sin\theta}>0.
\end{equation} 
Since $d_{\H^2}(\tilde{c}_2(0),\tilde{c}_2(b))\leq b$ we have $c_x\geq -b$ and $c_y>e^{-b}$. So we conclude Equation~\eqref{E:inequal} is true if \[\frac{1-\cos\theta}{\sin\theta}>b e^b.\]
The angles $\theta$ and $\phi$ are related by $\theta'=\phi'+\cos\theta-1$ (see Proposition~\ref{p:anglecompare}) and $\cos\theta\geq0$ implies $\int(\cos\theta-1)\geq-b$. Therefore we know that $\theta\geq\phi-b$. Furthermore, the function $\theta\mapsto (1-\cos\theta)/\sin\theta$ increases monotonically, so we conclude, the geodesics intersect if
\[be^b<\frac{1-\cos(\phi-b)}{\sin(\phi-b)}.\]
This is equivalent to \begin{equation}\label{E:implicitb}
f_\phi(b)\coloneqq \frac{1-\cos(\phi-b)}{\sin(\phi-b)}-b e^{b}>0.\end{equation} Its differential \[f'_\phi(b)=\frac{\cos(\phi-b)-1}{\sin^2(\phi-b)}-e^b(1+b)\] is less than zero for all $b<\phi$. Hence, $f_\phi$ is decreasing. Let us consider the limits at the boundaries:
\begin{align*}
\lim_{b\to0}f_\phi(b)&=\frac{1-\cos\phi}{\sin\phi}>0,\qquad \text{for }\phi\in(0,\pi/2)\\
\lim_{b\to\phi}f_\phi(b)&=-\phi e^\phi<0.
\end{align*}
We conclude, there exists exactly one $b_0(\phi)\in(0,\phi)$ with $f_\phi(b_0(\phi))=0$. Moreover, for all $b<b_0(\phi)$ we have $f_\phi(b)>0$, therefore the geodesics intersect for all $b<b_0(\phi)$.

The vertical mirror planes are given by $\gamma_i\times\R\subset\H^2\times\R$ for $i=0,b$.
\end{proof}
\begin{remark}
For $b=\phi$ the total curvature of $\tilde{c}_2$ is $b-\phi=0$ and therefore the two geodesics $\gamma_0$ and $\gamma_b$ do not intersect in any $p\in\H^2$, but in $\partial\H^2$. By Proposition~\ref{p:convergelimit} there exists $a>0$ to solve the first period problem, hence after reflection the contruction causes a complete singly periodic mc-$1/2$ surface $\tilde{M}(a(\phi,\phi),\phi,\phi)$ in $\H^2\times\R$ with infinitely many ends.
\end{remark}

Now we are able to solve the second period problem that is given by the angle $\alpha$. We want the surface to close after $2k$ reflections about vertical mirror planes, so $\alpha$ has to be $\pi/k$.

\begin{proposition}\label{P:2periods}
For each $k\geq3$ there exists $\epsilon=\epsilon(k)>0$, such that $\phi=\pi/k+\epsilon<\pi/2$ and there exists $0<b<b_0(\phi)$, such that the surface $\tilde{M}(a,b,\phi)$ has angular period $\alpha=\pi/k$.
\end{proposition}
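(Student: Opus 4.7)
The plan is to fix $\phi=\pi/k+\epsilon$ with $\epsilon=\epsilon(k)>0$ chosen small enough that $\phi<\pi/2$, which is possible since $\pi/2-\pi/k\geq \pi/6$ for every $k\geq 3$. For such a $\phi$, Proposition~\ref{p:convergelimit} produces $a(b,\phi)>0$ solving the first period problem for each $b>0$, while Proposition~\ref{p:intersect} supplies a non-empty admissible interval $(0,b_0(\phi))$ on which the two vertical mirror planes of $\tilde M(a(b,\phi),b,\phi)$ intersect, at some angle $\alpha=\alpha(b,\phi)>0$ equal to the hyperbolic angle at which the geodesics $\gamma_0,\gamma_b$ of Proposition~\ref{p:intersect} meet in $\H^2$. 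The strategy is then to apply the intermediate value theorem to $\alpha(\cdot,\phi)$ on $(0,b_0(\phi))$.

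The two boundary limits I would establish are
\[
\lim_{b\to b_0(\phi)^-}\alpha(b,\phi)=0\qquad\text{and}\qquad\lim_{b\to 0^+}\alpha(b,\phi)=\phi.
\]
The first limit is immediate from the definition of $b_0(\phi)$ in Proposition~\ref{p:intersect}: at $b=b_0(\phi)$ the geodesics $\gamma_0,\gamma_b$ become asymptotic at $\partial\H^2$ and hence meet at angle zero. For the second limit, the key input is that the total twist $\Phi$ of the surface normal along $c_2$ satisfies $\Phi(b)=\phi$ for every admissible $b$. This identity is encoded in the remark following Proposition~\ref{p:intersect} --- the total curvature of $\tilde c_2$ equals $b-\phi$ --- together with $\tilde{k}=1-\Phi'$ from Lemma~\ref{l:torsionangle}, and geometrically reflects that in the base $\R^2$ the normal must pivot by exactly the hinge angle $\phi$ on passing from $p_1$ to $p_2$. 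Substituting $\Phi(b)=\phi$ into the ODE $\theta'=\Phi'+\cos\theta-1$ of Proposition~\ref{p:anglecompare} and integrating yields
\[
\theta(b)=\phi+\int_0^b(\cos\theta-1)\,\diff s\longrightarrow \phi\quad\text{as }b\to 0^+.
\]
In the halfplane coordinates of Proposition~\ref{p:intersect} this gives $(c_x,c_y)\to(0,1)$ and shows that the initial tangent $(\sin\theta,-\cos\theta)$ of $\gamma_b$ converges to $(\sin\phi,-\cos\phi)$, whose Euclidean (hence hyperbolic) angle with the initial tangent $(0,-1)$ of $\gamma_0$ is exactly $\phi$.

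Combined with continuity of $\alpha$ in $b$ --- which follows by upgrading the subsequential convergence of the Plateau sections (from the curvature and area estimates used in Theorem~\ref{t:plateauinfinite}) to full convergence via the uniqueness of the minimal section granted by Proposition~\ref{p:uniquesection}, so that the sister normal at the endpoints of $\tilde c_2$, and therefore the geodesics $\gamma_0,\gamma_b$, depend continuously on $b$ --- the intermediate value theorem applied to $\alpha(\cdot,\phi)$ on $(0,b_0(\phi))$ delivers some $b$ with $\alpha(b,\phi)=\pi/k$, which is the desired conclusion. I expect the main obstacle to be this continuity step: one must check carefully that the boundary regularity of the Plateau solutions is sufficient for $\tilde\nu$ at the endpoints of $\tilde c_2$ (and not merely the surfaces themselves) to converge uniformly as $b$ varies, which is what is needed for the geodesics $\gamma_0,\gamma_b$ and thus the angle $\alpha$ to depend continuously on the parameter.
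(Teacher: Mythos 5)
Your intermediate-value strategy with the boundary value $\lim_{b\to 0^+}\alpha(b,\phi)=\phi$ matches the paper's, and your derivation of that limit via $\Phi(b)=\phi$ and $\theta(b)\ge\phi-b$ is essentially the same geometric input the paper packages through the Gau\ss{}--Bonnet identity $\alpha(b)=\phi-b-\vol(V(b))$. But the other boundary value is wrong: your claim that $\lim_{b\to b_0(\phi)^-}\alpha(b,\phi)=0$ does not follow from the definition of $b_0(\phi)$, and it is the step on which your IVT argument hinges.

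Look again at how $b_0(\phi)$ arises in Proposition~\ref{p:intersect}. The intersection condition is Equation~\eqref{E:inequal}, namely $c_x+c_y(1-\cos\theta)/\sin\theta>0$, but $b_0$ is defined as the zero of $f_\phi(b)=\frac{1-\cos(\phi-b)}{\sin(\phi-b)}-be^b$, which is obtained from~\eqref{E:inequal} only after applying the crude lower bounds $c_x\ge -b$, $c_y>e^{-b}$, $\theta\ge\phi-b$. So $f_\phi(b)>0$ is a \emph{sufficient}, not a necessary, condition for intersection: at $b=b_0(\phi)$ the geodesics $\gamma_0,\gamma_b$ will typically still meet at a strictly positive angle. (Compare the remark after Proposition~\ref{p:intersect}: the geodesics become asymptotic at $b=\phi$, not at $b=b_0(\phi)<\phi$.) The Gau\ss{}--Bonnet formula makes this explicit: $\alpha(b_0(\phi))=\phi-b_0(\phi)-\vol(V(b_0(\phi)))$, which has no reason to vanish. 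Without knowing that $\alpha(b_0(\phi)^-)<\pi/k$, your intermediate value argument does not close.

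The paper fills precisely this gap by the explicit formula and a specific choice of $\epsilon$: it sets $\epsilon_k=b_0(\pi/k)$, proves that $b_0(\cdot)$ is strictly increasing from the implicit relation $f_\phi(b_0(\phi))=0$, and then reads off
\[
\lim_{b\to b_0(\phi_k)}\alpha(b)=\pi/k+\bigl(b_0(\pi/k)-b_0(\phi_k)\bigr)-\vol(V(b_0(\phi_k)))<\pi/k,
\]
so the IVT applies. That monotonicity-plus-explicit-choice step is the content your proof is missing; you cannot replace it by asserting $\alpha\to 0$ at $b_0$. Your continuity discussion and your treatment of the $b\to 0^+$ limit are fine and align with the paper.
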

\begin{proof}
The angular period is given by the intersection angle of the vertical mirror planes, i.e. the intersection angle of the two geodesics. For $b<b_{0}(\phi)$ the geodesics $\gamma_0$ and $\gamma_b$ intersect by Proposition~\ref{p:intersect}. Hence, we can apply the Gau\ss{}-Bonnet Theorem to the compact disc $V\subset\H^2$ defined by $\partial V=\tilde{c}_2\cup\gamma_0\cup\gamma_b$:
\[\int\limits_V K+\int\limits_{\partial V} k_g+\sum \alpha_i=2\pi\chi(V),\]
where $\alpha_i,\,i=1,2,3$ are the exterior angles with $\alpha_i=\pi/2$ for $i=1,2$ and $\alpha_3=\pi-\alpha$. Furthermore, $k_g$ is the geodesic curvature with respect to the inner normal of $\partial V$ and since $\tilde{c}_2$ is a mirror curve $k_g=-\tilde{k}$ with respect to the surface normal.
Using this we get
\begin{equation}\label{E:gaussbonnet}\int\limits_V K-\int\limits_{\tilde{c}_2} \tilde{k}+2\pi-\alpha=2\pi.\end{equation}
From Lemma~\ref{l:torsionangle} we know $\tilde{k}=1-\phi'$. Integrating this shows Equation~\eqref{E:gaussbonnet} is equivalent to $\phi-b-\vol(V)=\alpha$. 

We claim, that the lengths $l(\gamma_i)$ are bounded from above for all $b\leq b_{0}(\phi)$. Recall from the proof of Proposition~\ref{p:intersect} that $\phi\geq\theta\geq\phi-b$. In particular, we have a lower bound for $\theta$, the angle that is given by the tangent of $\tilde{c}_2$ and the horocycle fibration. Assume that the lengths $l(\gamma_i)(b)\to\infty$ for $b\to 0$, this implies $\theta\to0$, a contradiction. If the lengths are bounded, the area tends to zero for $b\to 0$.

The angle $\alpha$ depends continuously on $b<b_{0}(\phi)$: \begin{equation}\label{E:alpha}\alpha(b)=\phi-b-\vol(V(b))\end{equation} and decreases. The idea is to show that for any $k\geq3$ exists $\phi_k\in(0,\pi/2)$ such that 
\[\lim_{b\to 0}\alpha(b)>\pi/k\quad \text{and}\quad \lim_{b\to b_0(\phi_k)}\alpha(b)<\pi/k.\]

On the one hand $\lim_{b\to 0}\alpha(b) =\phi_k$, hence we have to choose $\phi_k>\pi/k$. Therefore, for any $\epsilon_k>0$, $\phi_k\coloneqq\pi/k+\epsilon_k$ satisfies the first condition.

On the other hand we have \[ \lim_{b\to b_0(\phi_k)}\alpha(b)=\phi_k-b_0(\phi_k)-\vol(V(b_0(\phi_k))),\] hence we have to choose $\epsilon_k>0$ such that \[\phi_k<\frac\pi k+b_0(\phi_k)+\vol(V(b_0(\phi_k)))\quad\Leftrightarrow\quad \epsilon_k<b_0(\phi_k)+\vol(V(b_0(\phi_k))).\] We claim the function $b_0(\phi)$ increases monotonically. Recall Equation~\eqref{E:implicitb}: $b_0$ was defined implicitly by $f_\phi(b)= \frac{1-\cos(\phi-b)}{\sin(\phi-b)}-b e^{b}=0$. By the chain rule we have \[b'_0(\phi)=e^{b_0(\phi)}(1+b_0(\phi))+\frac{1-\cos(\phi-b_0(\phi))}{\sin^2(\phi-b_0(\phi))}>0.\] Therefore with $\epsilon_k=b_0(\pi/k)$ we get:
\[\lim_{b\to b_0(\phi_k)}\alpha(b)=\pi/k+\underbrace{b_0(\pi/k)-b_0(\phi_k)}_{<0}-\vol(V(b_0(\phi_k)))<\pi/k.\] Remains to show that $\phi_k<\pi/2$ for all $k\geq3$. Since $b_0$ increases this is true if $\pi/3+b_0(\pi/3)<\pi/2$, i.e. $b_0(\pi/3)<\pi/6$. But this follows directly from the fact that $f_{\pi/3}(\pi/6)<0$.

By the intermediate value theorem follows, there exists $b^*\in(0,b_{0}(\pi/k+\epsilon_k))$ such that $\alpha(b^*)=\pi/k$.
\end{proof}

The proposition proves the existence of one pair $(b_k,\phi_k)$ for each $k\geq3$ such that the angular period is $\pi/k$. It is natural to ask if there is a family of cmc surfaces with this angular period and $k$ ends. The answer is yes:

\begin{proposition}\label{P:neighb}
For each $k\geq3$ there exists an interval $U_k\subset(0,\pi/2)$, such that for all $\phi\in U_k$ there exists $b(\phi)>0$, such that each cmc surface $\tilde{M}(a,b(\phi),\phi)$ has angular period $\alpha=\pi/k$.
\end{proposition}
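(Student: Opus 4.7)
The plan is to extend the isolated solution produced by Proposition~\ref{P:2periods} to an open family using continuity and a second application of the intermediate value theorem. Fix $k\geq 3$ and let $\phi_k=\pi/k+b_0(\pi/k)$ be the angle chosen in that proof, together with $b^*\in(0,b_0(\phi_k))$ satisfying $\alpha(b^*,\phi_k)=\pi/k$, where
\[\alpha(b,\phi)=\phi-b-\vol(V(b,\phi))\]
as in Equation~\eqref{E:alpha}. Proposition~\ref{p:convergelimit} supplies, for each admissible pair $(b,\phi)$, a length $a=a(b,\phi)>0$ solving the first period problem, so it is enough to track how the second period $\alpha$ varies with $(b,\phi)$.

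First I would verify that $\alpha$ is jointly continuous on the open region $\mathcal{R}=\{(b,\phi):\phi\in(0,\pi/2),\ 0<b<b_0(\phi)\}$. Openness of $\mathcal{R}$ follows from the implicit definition of $b_0$ by Equation~\eqref{E:implicitb} together with the strict monotonicity $b'_0(\phi)>0$ established in the proof of Proposition~\ref{P:2periods}. The geodesics $\gamma_0$ and $\gamma_b$ depend smoothly on their initial data $\tilde{c}_2(0),\tilde{\nu}(0)$ and $\tilde{c}_2(b),\tilde{\nu}(b)$, which in turn depend continuously on $(b,\phi)$ through the sister construction; hence $\vol(V(b,\phi))$ is continuous, and so is $\alpha$. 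The two boundary limits computed in Proposition~\ref{P:2periods},
\[\lim_{b\to 0^+}\alpha(b,\phi_k)=\phi_k>\pi/k,\qquad \lim_{b\to b_0(\phi_k)^-}\alpha(b,\phi_k)=\phi_k-b_0(\phi_k)-\vol(V(b_0(\phi_k),\phi_k))<\pi/k,\]
both depend continuously on $\phi$ (using continuity of $b_0$ and of the limit area), so the strict inequalities persist on some open interval $U_k\ni\phi_k$ contained in $(0,\pi/2)$.

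For every $\phi\in U_k$ the continuous map $b\mapsto\alpha(b,\phi)$ on $(0,b_0(\phi))$ therefore takes values strictly above and strictly below $\pi/k$. The intermediate value theorem supplies $b(\phi)\in(0,b_0(\phi))$ with $\alpha(b(\phi),\phi)=\pi/k$, and Proposition~\ref{p:convergelimit} yields the corresponding $a(b(\phi),\phi)$. The surface $\tilde{M}(a,b(\phi),\phi)$ then has angular period exactly $\pi/k$ and closes after $2k$ reflections across its vertical mirror planes, as required.

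The main obstacle I expect is the continuity of $\vol(V(b,\phi))$ up to the boundary $b\to b_0(\phi)^-$, where the two geodesics $\gamma_0$ and $\gamma_b$ approach tangency; the bound on the lengths $l(\gamma_i)$ that was established in Proposition~\ref{P:2periods} (via the lower bound $\theta\geq\phi-b$ on the horocycle angle) keeps $\vol(V)$ finite and continuous up to this limit, so that the strict inequality in the second boundary limit is stable under small perturbations of $\phi$. Once this continuity is in place, the existence of the open interval $U_k$ is an immediate consequence of the intermediate value argument.
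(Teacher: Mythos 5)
Your argument is correct, but it takes a slightly different route from the paper. The paper's proof applies the implicit function theorem to the function $G(b,\phi)=\alpha(b,\phi)-\pi/k$, using the strict monotonicity $\partial_b\alpha<0$ established in Proposition~\ref{P:2periods}; this produces a continuously differentiable map $g\colon U_0\to V$ with $g(\phi_k)=b(\phi_k)$ and $G(g(\phi),\phi)=0$, and then $U_k$ is obtained by intersecting $U_0$ with the open condition $g(\phi)<b_0(\phi)$. You instead argue purely topologically: joint continuity of $\alpha$ together with persistence of the two strict boundary inequalities yields, via the intermediate value theorem on $b\mapsto\alpha(b,\phi)$ for each $\phi$ near $\phi_k$, a solution $b(\phi)\in(0,b_0(\phi))$ and hence automatically in the admissible region. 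Your approach is more elementary, since it uses only continuity rather than differentiability of $\vol(V(b,\phi))$, at the price of not providing the local uniqueness or smooth dependence of $b(\phi)$ that the implicit function theorem delivers for free (the paper's $g$ is unique and $C^1$; your $b(\phi)$ is merely some choice, though the monotonicity in $b$ would in fact give uniqueness). Both arguments are complete for the statement as written, and your attention to the continuity of $\vol(V)$ up to the degenerating boundary $b\to b_0(\phi)^-$ correctly identifies where care is needed and correctly resolves it by the length bounds from Proposition~\ref{P:2periods}.
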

\begin{proof}
In Proposition~\ref{P:2periods} the existence of a pair $(b(\phi_k),\phi_k)$ was proven such that the surface $\tilde{M}(a,b(\phi_k),\phi_k)$ has the desired property. By Equation~\eqref{E:alpha} we know, the pair $(b(\phi_k),\phi_k)$ is a zero of the following continuously differentiable function
\[
G(b,\phi)=\alpha(b,\phi)-\pi/k=\phi-b-\vol(V(b,\phi))-\pi/k.\] 
The angle $\alpha(b,\phi)$ is given by the two geodesics $\gamma_0$ and $\gamma_b$. Recall from the proof of Proposition~\ref{P:2periods} that $\partial_b\alpha(b,\phi)<0$. By the implicit value theorem there exists an open neighborhood $U_0$ of $\phi_k$, an open neighborhood $V$ of $b(\phi_k)$, and a unique continuously differentiable function $g\colon U_0\to V$ with $g(\phi_k)=b(\phi_k)$ such that $G(g(\phi),\phi)=0$ for all $\phi\in U_0$.

To define $U_k$ notice that $g(\phi_k)<b_0(\phi_k)$, therefore the subset $U_k\coloneqq \{\phi\in U_0\colon g(\phi)<b_0(\phi)\}\cap(0,\pi/2)$ is not empty. Hence, for all $\phi\in U_k$ there exists $b=g(\phi)<b_0(\phi)$ such that $\alpha=\pi/k$.
\end{proof}

\begin{remark}
We want to analyse the limiting cases:
\begin{itemize}
\item $\phi\to\inf U_k\geq\pi/k$: From $\phi\to\pi/k$ follows \mbox{$b+\vol(V(b,\phi))\to 0$}, which implies $b\to0$. Assuming the solution of the first period problem $a(b,\phi)>0$ for $b\to0$ leads to the $k$-noid from Section~\ref{S:knoid} which has a positive period. Therefore if $\inf U_k=\pi/k$ then the sequence of $k$-noids converges to an union of $k$ horocylinders away from the singularity for $\phi\to \pi/k$.
\item $\phi\to\sup U_k\leq \pi/2$: Since $\partial_\phi\vol(V(b,\phi))\leq0$,  $\phi\to\sup U_k$ implies that $b$ increases. For the cmc surfaces this means that the length of the finite horizontal symmetry curve grows.
\end{itemize}
\end{remark}

\subsection{Main Theorem}

After solving the two period problems we can now prove the existence of the mc $1/2$ surface with genus $1$:
\begin{theorem}
For $k\geq 3$, there exists a family of surfaces $M$ with constant mean curvature $1/2$ in $\H^2\times\R$ such that:

\noindent
$\bullet\, M$ is a proper immersion of a torus minus $k$ points,

\noindent
$\bullet\, M$ is Alexandrov embedded.

\noindent
$\bullet\, M$ has $k$ vertical mirror planes enclosing an $\pi/k$-angle,

\noindent
$\bullet\, M$ has one horizontal mirror plane.
\end{theorem}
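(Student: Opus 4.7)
The plan is to assemble the complete surface $M$ from the fundamental piece constructed in the previous propositions and then verify each of the four stated properties in turn.

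First, fix $k \geq 3$ and choose $\phi \in U_k$ from Proposition~\ref{P:neighb}; setting $b = b(\phi)$ and $a = a(b,\phi)$ (from Proposition~\ref{p:convergelimit}) yields a minimal disk $M_0 = M(a,b,\phi) \subset \Nil_3(\R)$ via Theorem~\ref{t:plateauinfinite} in which both period problems are simultaneously solved. Let $\tilde{M}_0 \subset \H^2\times\R$ denote its sister mc $1/2$ surface via Theorem~\ref{t:correspondence}. By construction $\tilde{M}_0$ is a simply connected immersed disk whose boundary consists of four mirror curves: two lying in vertical mirror planes that meet at angle $\pi/k$ (solved second period) and two horizontal mirror curves lying in a common horizontal mirror plane (solved first period).

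Next I would extend $\tilde{M}_0$ to a complete surface $M$ by iterated Schwarz reflection across its bounding mirror planes. The reflection principles of Section~\ref{SS:reflection} guarantee that each reflection extends the surface smoothly across its mirror curves, and Proposition~\ref{P:boundarybranch} applied to $M_0$ in $\Nil_3(\R)$, together with the isometric Daniel correspondence, rules out boundary branch points so the extension is an immersion. The two vertical mirror planes at angle $\pi/k$ generate a dihedral group $D_k$ of order $2k$; adjoining the horizontal reflection yields an order-$4k$ symmetry group, so $M$ acquires the $k$ vertical mirror planes (at angles $\pi/k$) and the single horizontal mirror plane claimed in the theorem. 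Properness then follows because any compact $K \subset \H^2\times\R$ meets only finitely many of the $4k$ reflected copies of $\tilde{M}_0$.

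To identify the topology I would perform an Euler characteristic count: the $4k$ disk copies of the fundamental piece are glued along their matching finite mirror curves under the $D_k \times \Z_2$ action, while the infinite mirror curves—sisters to the infinite edges of the contour $\Gamma$ in $\Nil_3(\R)$—become asymptotic to the $k$ ends after identification. Tracking which arcs become interior and which become end-asymptotic yields $\chi(M) = -k$, so $M$ is topologically a torus with $k$ punctures.

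Finally, for Alexandrov embeddedness I would exploit that $M_0$ is a section over the convex region $\Delta$ in the base $\R^2$ of the Riemannian fibration $\Nil_3(\R)\to\R^2$, so $\tilde{M}_0$ is embedded and inherits a well-defined ``inside'' region (the closed domain bounded by $\tilde{M}_0$ together with its mirror planes and a neighborhood of the ends). Glueing the $4k$ reflected copies of this region along their common mirror-plane faces gives a $3$-manifold with boundary equipped with an immersion into $\H^2 \times \R$ whose boundary immersion is precisely $M$, which is the definition of Alexandrov-embeddedness. The main obstacle I expect is exactly this last step: one must verify the compatibility of the reflected ``inside'' regions at each mirror-plane face, especially along the infinite mirror curves near the ends where the embedded graph structure degenerates. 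This compatibility relies on the section property from Theorem~\ref{t:plateauinfinite}, the monotonicity recorded in Remark~\ref{R:monotone}, and a careful asymptotic analysis of $\tilde{M}_0$ near its ends.
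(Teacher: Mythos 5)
Your overall architecture matches the paper's: pick $(b(\phi),\phi)$ with $\phi\in U_k$ from Proposition~\ref{P:neighb}, take the Plateau solution of Theorem~\ref{t:plateauinfinite}, pass to the mc~$1/2$ sister, reflect, and then identify the topology and verify Alexandrov embeddedness. The two places where your proposal falls short of an actual proof are the topology computation and, more seriously, the Alexandrov-embeddedness step.

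On topology: you assert $\chi(M)=-k$ without performing a count, and $\chi=-k$ alone does not single out a torus minus $k$ points (a sphere minus $k+2$ points has the same Euler characteristic). The paper compactifies the $4k$ fundamental pieces, one per orbit of the group generated by the three mirror planes, and computes $V-E+F = 4k-8k+4k = 0$ for the closed compactification, hence $g=1$; combined with the $k$ ends this pins down the topology. You should make the cell count explicit or at least argue why the compactification is closed and orientable with the claimed $\chi$.

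On Alexandrov embeddedness: you explicitly flag the compatibility of the reflected ``inside'' regions as an obstacle you have not resolved, and indeed this is where the substance lies. The paper does \emph{not} attempt to glue solid regions abstractly; it shows directly that the fundamental piece $\tilde{M}$ is embedded and confined to the region cut out by the three mirror planes. The embeddedness of $\tilde{M}$ is argued curve by curve: $\tilde c_1, \tilde c_3$ are graphs (sisters of horizontal geodesics), $\tilde c_2$ is handled by Proposition~\ref{p:anglecompare}, and the unbounded horizontal mirror curve $\tilde c_0$ is controlled by the horocycle-foliation angle $\theta \le \alpha = \pi - \phi < \pi$ together with the graph property of $M_\phi$ along $c_0$. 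Divergent sequences with horizontal limiting normal are dealt with via the Hauswirth--Rosenberg--Spruck half-space argument from~\cite{HRS2008}. Confinement to the horizontal half-space uses the maximum principle plus the fact that a tangency of a compact minimal section $M_N$ with a horizontal umbrella would produce $2m$ intersection curves and hence a loop, contradicting uniqueness of the section; confinement to the vertical sector uses the local convexity $\tilde k = 1 - \alpha' < 1$ along $\tilde c_0$ together with the divergence of $\pi(\tilde c_3)$ to $\partial\H^2$. None of these ingredients appear in your proposal; pointing generically to Remark~\ref{R:monotone} and the section property does not supply them. This is a genuine gap, not a detail you can defer.
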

\begin{proof}
For $k\geq3$ we consider $(b(\phi),\phi)$ for $\phi\in U_k\subset(0,\pi/2)$ given by~\ref{P:neighb}. The minimal surface $M_\phi=M(a(b(\phi),\phi),b(\phi),\phi)$ defined by~\ref{p:convergelimit} solves the first period problem. By~\cite{daniel2007} the fundamental piece $M_\phi$ has a sister surface $\tilde{M}$ with constant mean curvature $1/2$ in $\H^2\times\R$, which is a graph. By construction and from the solution of the period problems, $\tilde{M}$ has one horizontal and two vertical mirror planes; the two vertical mirror planes enclose an angle $\pi/k$. It consists of four mirror curves: two horizontal (one bounded and one unbounded) and two vertical (one bounded and one unbounded). After Schwarz reflection about one of the vertical mirror planes followed by reflection about the horizontal mirror plane we have completed one end: It is built up of four fundamental pieces $\tilde{M}$. We use the Euler characteristic 
\[
\chi=V-E+F=2-2g
\]
to determine the genus $g$ of the complete mc $1/2$ surface $M$ with $k$ ends, which is generated by Schwarz reflection. We have $\chi=4k-8k+4k=0$ and therefore $g=1$.

\begin{figure}
\begin{center}
\psfrag{c1}{$c_1$}
\psfrag{c2}{$c_2$}
\psfrag{a}{$a$}
\psfrag{n}{$n$}
\psfrag{pr}{$\pi$}
\psfrag{phi}{$\phi$}
\psfrag{pi}{$\pi/k$}
\psfrag{ct1}{$\tilde{c}_1$}
\psfrag{ct2}{$\tilde{c}_2$}
\psfrag{ct0}{$\tilde{c}_0$}
\psfrag{ct3}{$\tilde{c}_3$}
\psfrag{H}{$\H^2\times\{0\}$}
\includegraphics[width=0.7\textwidth]{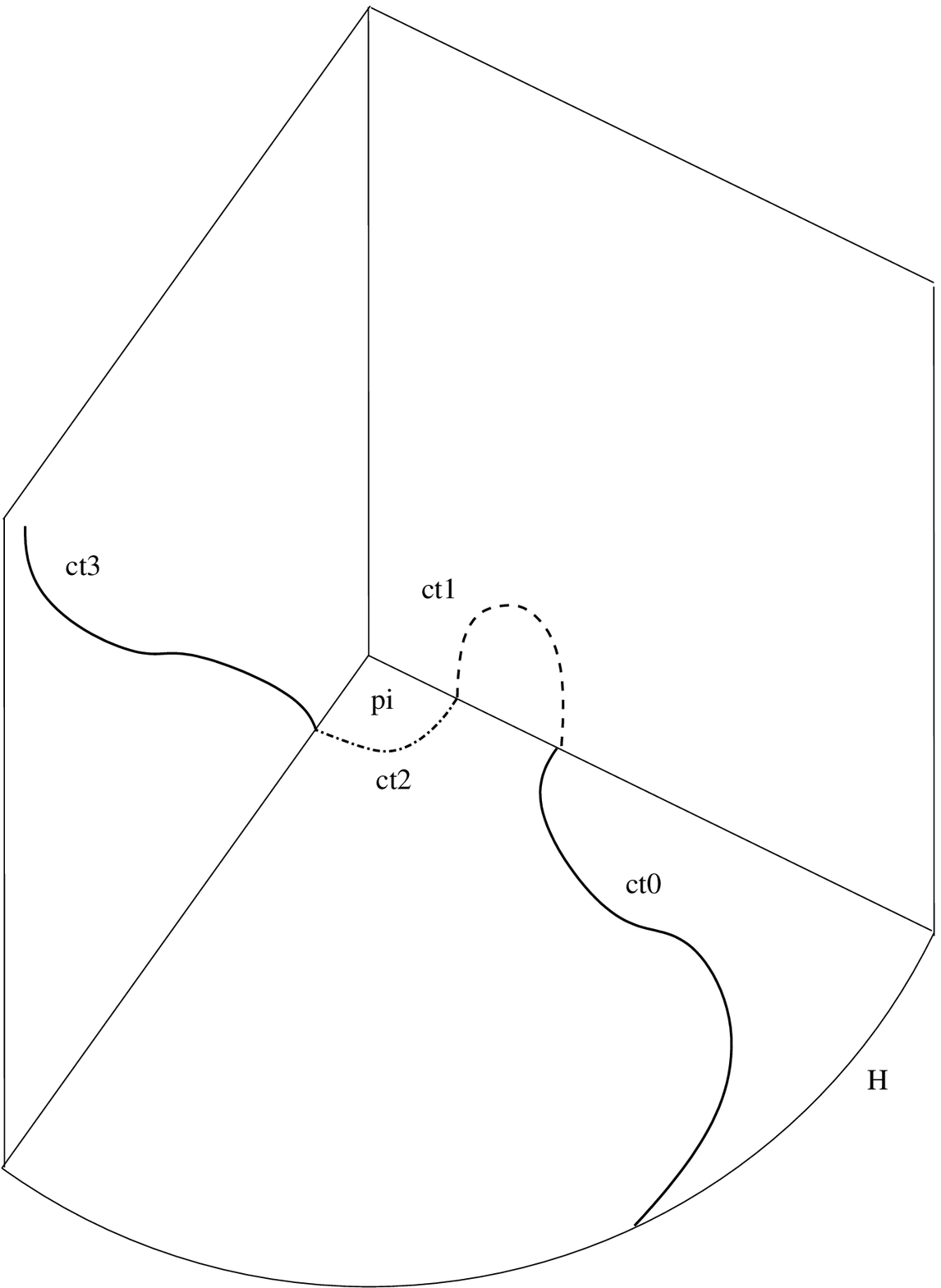}
\end{center}
\end{figure}

We claim that $M$ is Alexandrov-embedded if we choose as before the downward-pointing normal $\nu$. We show the fundamental piece $\widetilde{M}$ is embedded and stays in the subset of $\H^2\times\R$ that is bounded by mirror planes. The mirror planes are given by the symmetry curves with infinte length. We discuss the embeddedness of each boundary arc, since $\widetilde{M}$ is transverse to the fibres, the fundamental piece is embedded. Therefore, the complete surface $M$ is Alexandrov-embedded.

Let us recall the notation, $c_1$ denotes the finite horizontal geodesic in $\partial M_\phi$ and $\tilde{c}_1$ its sister curve in $\partial\widetilde{M}$. 
The sister curve $\tilde{c}_1$ is a mirror curve in a vertical plane. The curve is graph and therefore embedded. The same holds for the other horizontal curve and its sister curve in a vertical plane, call it $\tilde{c}_3$.

Along the horizontal geodesic $c_3$ we have $\la\eta,\xi\ra>0$ if we choose $(c_3'\eta,\nu)$ positive oriented. By the first order description of the sister surfaces we know that the projection of the vertical vector field $\xi$ on the tangent plane rotates by $\pi/2$ under conjugation. Therefore, we have $\la\tilde{c}_3',\tilde{\xi}\ra<0$. By construction, we know that $\la\eta,\xi\ra\to 1$ in the end along the sister curve $c_3$. Hence, in the sister surface for the corresponding mirror curve we have $\la\tilde{c}'_3,\tilde{\xi}\ra\to -1$. In other words, the mirror curve comes from $\infty$ in the end.

With the arguments of Hauswirth, Rosenberg and Spruck in the proof of Theorem 1.2 in~\cite{HRS2008} we know that each divergent sequence $(p_n)$ in $M$ with \linebreak \mbox{$\la\tilde{\nu}(p_n),\tilde{\xi}\ra\to0$} has a limit in the boundary in the projection: $\pi(p_n)\to\partial\H^2$. The idea of the proof is to show that if the sequence has a horizontal normal in the limit and a limiting point in the projection, then the whole surface is converging to a horocylinder and stays on one side. Hence, by the half-space theorem it is a horocylinder which is a contradiction. 

As in the setup let $c_2$ denote the finite vertical geodesic, recall that its sister curve is embedded. Let $c_0$ denote the remaining vertical geodesic in the boundary $\partial M_\phi$ parametrized in $\xi$-direction. Let $\alpha$ denote the twist of the horizontal normal $\nu$ along $c_0$. We have $\alpha'>0$, i.e. the normal rotates monotonically, because $M_\infty$ is a section. We consider the horocycle fibration of $\H^2$ given by $\tilde{\nu}(0)$ as in Proposition~\ref{p:anglecompare}. By the proposition the angle $\theta$ the sister curve encloses with the fibration in $\H^2$ is smaller than $\alpha=\pi-\phi<\pi$. Hence, $\tilde{c}_0$ is embedded. In summary this proves the first part, $\widetilde{M}$ is embedded.

In the second step we show that $\widetilde{M}$ is bounded by symmetry planes. First, we have to check if the surface stays in a horizontal halfspace of $\H^2\times\R$. We define the horizontal mirror plane of $M$ to be $\H^2\times\{0\}$. By the maximum principle $\widetilde{M}$ has no (global) minimum below $\H^2\times\{0\}$.

Assume there is a curve $\tilde{c}$ in $\widetilde{M}$ with $\tilde{c}(0)\in\partial \widetilde{M}$, whose third component goes to minus infinity. Then $\la\tilde{c}'(t),\tilde{\xi}\ra<0$ for $t>T$ for some $T\in\R$. This implies for the conormal $\la\eta(t),\xi\ra>0$ along the sister curve $c\subset M_\phi$ in the end. Let us recall that $M_\phi$ was constructed as a limit of compact minimal sections $M_n$. Hence there exists a $N\in\N$ such that $\T_p M_N$ is horizontal. The intersection $V$ of $M_N$ and the horizontal umbrella in $p$ consists of $2m$ curves, $m\geq2$. But this implies there exists a loop in $V$ which contradicts the uniqueness of the minimal section. Therefore $\widetilde{M}\subset \H^2\times\R^+_0$.

To finish the proof we have to show that the surface lies on one side of the vertical symmetry plane: We have seen that the projection of the vertical symmetry curve $\tilde{c}_3$ is converging to some point in the boundary $\partial\H^2$. Along $\tilde{c}_0$ the surface is at least locally to the side of the normal, since $\tilde{k}=1-\alpha'<1$ by the graph property of the minimal surface along $c_0$. Therefore, since it is also a graph, it is defined on a domain $\Omega\subset\H^2$ given by $\pi(\tilde{c}_3)\cup\tilde{c}_0$.
\end{proof}

\subsection{Conclusion and outlook}

We constructed an mc $1/2$ surface in $\H^2\times\R$ with $k$ ends, genus $1$ and $k$-fold dihedral symmetry, $k\geq3$, which is Alexandrov embedded. We had to solve two period problems in the construction. The first period guarantees that the surface has exactly one horizontal symmetry. For the second period we had to control a horizontal mirror curve to get the dihedral symmetry. In the case of $H\ne0$ the total curvature of a horizontal mirror curve depends not only on the twist of the normal, but also on its length. An interesting problem is to construct non-embedded examples that correspond to those presented here. Gro\ss{}e-Brauckmann proved in~\cite{brauckmann1993} that each Delaunay surface is the associated mc $1$ surface of a helicoid in $\S^3$. The family of Delaunay surfaces consists of embedded (unduloid) and non-embedded (nodoid) examples. In order to construct non-Alexandrov embedded examples one has to consider the minimal surface from Proposition~\ref{p:convergelimit}, and choose the positive oriented surface, i.e. the upward-pointing surface normal $\nu$. If $(c'_i,\eta,\nu),\,i=0,2$ is positively oriented, then $\la c'_i,\xi\ra=-1$. Therefore the twist $\alpha$ is decreasing which implies $\tilde{\kappa}=1-\alpha'>1$ for $\tilde{c}_{0/2}$. With this setup one has to solve the second period problem.

As far as the author knows it is an open problem to show the convergence of the ends of the cmc surface. It would be nice to prove, that $M$ has catenoidal ends, in the sense they are described in~\cite{DH2009}. To show this one may consider the minimal surface in $\Nil_3(\R)$ and wrap it between two copies of horizontal helicoids from~\cite{DH2009} which differ by a vertical translation about height $h$:
\begin{align*}
H^1_1(u_1,v_1)&=\frac{\sinh(\alpha v_1)}{\alpha(\psi'(u_1)-\alpha)}\cos\psi(u_1)\\
H^2_1(u_1,v_1)&=-G(u_1)\\
H^3_1(u_1,v_1)&=\frac{-\sinh(\alpha v_1)}{\alpha(\psi'(u_1)-\alpha)}\sin\psi(u_1), \text{ and}\\
H^1_2(u_2,v_2)&=\frac{\sinh(\alpha v_2)}{\alpha(\psi'(u_2)-\alpha)}\cos\psi(u_2)\\
H^2_2(u_2,v_2)&=-G(u_2)\\
H^3_2(u_2,v_2)&=\frac{-\sinh(\alpha v_2)}{\alpha(\psi'(u_2)-\alpha)}\sin\psi(u_2)+h.
\end{align*}
Where $\alpha$ is chosen such that the width of the helicoid is $a\sin\phi$, which is possible by Lemma~\ref{l:a0alphainf}.

The idea is to prove exponential convergence of the two helicoids, i.e. there exists $C$, $\lambda\in\R$ independent of $x\in H_1$ such that 
\begin{equation}\label{E:expconv}
d(x,H_2)<C e^{-\lambda\abs{x}},\quad \text{for }\abs{x}\to\infty.
\end{equation}
Since the Plateau solution $M(a,b_0)$ is bounded by $H_1$ and $H_2$ for an appropiate choice of $h$, this proves that $M_\phi$ has a helicoidal end and since the convergence is exponential this translates to the sister surface. 

\bibliographystyle{amsalpha}
\bibliography{/Users/juliaddg/Documents/bibliography}

\end{document}